\newtheorem{teo}{Theorem}[section]
\newtheorem{defi}[teo]{Definition}
\newtheorem{coro}[teo]{Corollary}
\newtheorem{fatto}[teo]{Fact}
\newtheorem{prop}[teo]{Proposition}
\theoremstyle{remark}
\newtheorem{remark}[teo]{Remark}
\newtheorem{notation}[teo]{Notation}
\begin{document}

\title[Infinite forcing and the generic multiverse]{Infinite forcing and the generic multiverse}
\author{Giorgio Venturi}
\email{gio.venturi@gmail.com}
\address{Philosophy department Unicamp
Rua Cora Coralina, 100
Barão Geraldo, SP.}

\subjclass[2010]{03E35, 03E57, 03C25}
\keywords{Set theory, genericity, forcing, Robinson's infinite forcing, Bounded Forcing Axioms, Generic absoluteness}

\maketitle

\begin{abstract}
In this article we present a technique for selecting models of set theory that are complete in a model-theoretic sense. Specifically, we will apply Robinson infinite forcing to the collections of models of ZFC obtained by Cohen forcing. This technique will be used to suggest a unified perspective on generic absoluteness principles. 
\end{abstract}

\vspace{0.5 cm}

\vspace{0.5 cm}

\section*{Introduction}

Soon after the invention of forcing, in 1963, the abundance of independence results started to undermine
the hope for a unified picture of the universe of all sets. An exemplar reaction is the formalist perspective taken 
by Cohen himself in the more philosophical interpretation of his celebrated result on the independence of the Continuum Hypothesis. 

\begin{quote}
So, let me say that I will ascribe to Skolem a view, not explicitly stated by him, that there is a reality to mathematics, but axioms cannot describe it. Indeed one goes further and says that there is no reason to think that any axiom system can adequately describe it.\footnote{\cite{Cohen2005}, p. 2417.}
\end{quote}

But forcing, 
by itself, does not disprove a realist perspective on truth in set theory. On the contrary, the deep understanding and refinement
of the method helped the search for forcing invariant theories \cite{wooBMMc}, \cite{Viale2}, 
with the goal to isolate the right axiomatic extensions of ZFC.

The question whether forcing  suggests to abandon or allows to strengthen our search for a unified picture  
does not have a clear answer. 
Moreover, even on the side of a realist interpretation of mathematical reality, the debate on competing extensions of ZFC shows that 
choosing the right criteria to overcome independence is a complex task, where not only mathematical considerations
need to be taken into account \cite{FriedmanArrigoni}, 
\cite{Magidor}, \cite{Vaananenmultiverse}, \cite{WoodinInfinite}.

We propose here to accept a generic perspective on mathematical 
truth, without nevertheless embracing position
\emph{à la} Balaguer-Hamkins \cite{Hamkins}, \cite{Balaguer}, akin to a kind of formalism. 
Indeed, the starting point of this work consists in accepting independence as an essential limitation of our axiomatic method.
This means that, in order to complete ZFC, we should not only try to extend the axioms of set theroy, but 
we also need semantic considerations for the selection of the right models. But on which ground should we choose?

Tools for the semantic completion of mathematical theories have been largely studied in model theory. 
We propose here to apply a technique that has proven to be successful in other fields: \emph{Robinson infinite forcing}. 
This method originates from the study of model complete theories and generalizes the notion of algebraically
closed field to the algebraic study of other classes of structures. Differently from Cohen's forcing, Robinson's  
infinite forcing does not build new models but it selects the generic ones out of a given class of structures. 

In this paper we investigate the possibility to apply Robinson infinite forcing to the so-called  \emph{generic multiverse}:
the collection of ZFC models obtained by Cohen forcing. This will allow us to use
algebraic considerations for the selection of the relevant generic models of set theory. Interestingly, 
these generic models will  show a high degree of genericity, offering a unified picture of generic absoluteness principles like 
Bounded Forcing Axioms, Resurrection Axioms, and Maximality Principles. 
If any, the importance of this perspective 
is given by the unified picture it offers.

 The paper is structured as follows: in \S 1.1 we briefly present the general theory of Robinson infinite forcing, that we 
 apply, in \S 1.2, to the generic multiverse of ZFC. Finally, in \S 1.3, we offer two concrete examples of generic models
 of set theory. We conclude, in \S 2, with plans for future work.

\section{Robinson infinite forcing of Cohen generic multiverse}

The basic structure we study in this paper is the collection of generic extensions of a countable transitive model $M$; c.t.m. henceforth.

\begin{defi}
Given a c.t.m. $M$ of ZFC and a class of forcings $\Gamma$, defined by a first order formula $\varphi_{\Gamma}$ in the language of set theory,
we let $\mathbb{M}_{M}^\Gamma$ be the collection of all generic extensions of $M$ by elements of $\Gamma^{M}$, i.e., the
extension of the relativization of $\varphi_{\Gamma}$ to $M$. Moreover for 
$N, N' \in \mathbb{M}_{M}^\Gamma$ we set $N' \leq_{\mathbb{M}_{M}^\Gamma} N$, whenever $N'$ is a generic extension of $N$
by means of a notion of forcing $\mathbb{P} \in \Gamma^N$.
\end{defi}

We call $\mathbb{M}_{M}^\Gamma$  the \emph{$\Gamma$ set-generic multiverse of $M$}. Notice that if a class of 
posets $\Gamma$ contains the trivial forcing and is closed under two steps iterations, we can consider a multiverse 
$\mathbb{M}_{M}^\Gamma$ as a partially ordered set. 

\begin{notation}
We indicate the Cohen forcing for adding one Cohen real as 
$Add(\omega, 1)$, while with $Add(\omega, \omega)$ we refer to its isomorphic copy which adds
$\omega$-many Cohen reals. We regard $Add(\omega, 1)$ as the partial order of finite binary sequences 
in $2^{<\omega}$ and each Cohen real $c$ as an element of the Cantor space $2^\omega$. 
\end{notation}

We can now define the two main mutliverses we will study in \S 1.3.

\begin{defi}
By $\mathbb{M}_{M}^C$ we indicate the $\Gamma$ set-generic multiverse of $M$, where $\Gamma$ consists of the Cohen forcing $Add(\omega, 1)$ and the trivial forcing.  Moreover, by $\mathbb{M}_{M}^{\kappa}$ we indicate  the multiverse consisting  of all generic
extensions of $M$ by forcing of cardinality $\leq \kappa^M$. 
\end{defi}

We call $\mathbb{M}_{M}^C$ the \emph{Cohen generic multiverse of $M$}, while $\mathbb{M}_{M}^{\kappa}$ the \emph{$\kappa$ generic multiverse of $M$}.

 \subsection{Robinson's forcing}

 The central motivation underlying Robinson's forcing\footnote{See \cite{HW} and \cite{Hodges} for a general presentation.} is not set theory, but algebra. In particular, Robinson was interested in finding analogs of algebraically closed fields within other classes of structures. 
To this aim Robinson introduced minimal conditions which must be satisfied in order for a structure to have ``algebraically closed'' extensions. 

Robinson defined two distinct model-theoretic notions of forcing, namely, infinite and finite forcing. The latter resembles more closely Cohen's forcing and
it will not be discussed in this work. On the other hand, infinite forcing characterizes a concept that is closely related to the notion of
existential completeness---i.e., when $\Sigma_1$-formulas are absolute between a structure and its extensions---and
 of model-companionship, which is defined as follows.

\begin{defi}
Given two first order theories $T$ and $T^*$, expressed in the same language, we say that $T^*$ is the \emph{model companion}
of $T$ iff  (a) $T$ and $T^*$ are mutually consistent (i.e., every model of $T$ is embeddable in a model of $T^*$ and vice versa)
and (b) $T^*$ is model complete (i.e., every embedding between two models of $T^*$ is elementary). 
\end{defi}

In order to apply Robinson infinite forcing to the Cohen generic multiverse, we recall few definitions and facts. 

\begin{defi}\label{InfRobFor}
Infinite forcing is a relation between a structure $M$ and a formula $\varphi$, with (a possibly empty set of) parameters in $M$. In symbols $M \VDash_i \varphi$. In words $M$ $i$-forces $\varphi$. It can be recursively characterized as follows. 

\begin{enumerate}
\item if $\varphi$ is atomic, then  $M \VDash_i \varphi$ if and only if  $M \models \varphi$,
\item $M \VDash_i  \varphi \land \psi$ if and only if $M \VDash_i \varphi$ and $M \VDash_i \psi$,
\item $M \VDash_i  \varphi \lor \psi$ if and only if $M \VDash_i \varphi$ or $M \models_i \psi$,
\item $M \VDash_i \exists x\varphi(x)$  if and only if $M \VDash_i \varphi(a)$, for some element $a \in M$,
\item $M \VDash_i \neg \varphi$  if and only if, for any extension $N$ of $M$, $N$ does not $i$-force $\varphi$. 
\end{enumerate}
\end{defi}

As usual we consider other connectives like $\forall$ and $\to$ as abbreviations.

\begin{remark}
In Definition \ref{InfRobFor} we are implicitly assuming the existence of an appropriate interpretation, in $M$, of the language in which 
a formula $\varphi$ is expressed. For the sake of simplicity we will continue to avoid this level of details, since it is not relevant for 
the arguments of this paper. 
\end{remark}

\begin{defi}
We say that  a structure $M$ is \emph{infinitely generic} whenever, for any formula $\varphi$, it is the case that $M \VDash_i \varphi$ or $M \VDash_i \neg\varphi$. 
\end{defi}

Infinitely generic structures are those in which forcability and satisfiability coincide.  

\begin{teo}(\cite{HW}, Theorem 3.3)\label{Infgen}
If a structure $M$ is infinitely generic, then 
 $M \models \varphi$ iff $M \VDash_i \varphi$, for every formula $\varphi$. 
\end{teo}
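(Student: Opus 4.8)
The plan is to prove the stated equivalence by induction on the syntactic complexity of $\varphi$, allowing parameters from $M$ throughout. The recursive clauses of Definition~\ref{InfRobFor} will make the inductive step for the connectives $\wedge$, $\vee$ and $\exists$ purely formal, and the hypothesis that $M$ is infinitely generic will be needed only to treat negation.

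First I would settle the base case: if $\varphi$ is atomic, then clause (1) of Definition~\ref{InfRobFor} says precisely that $M \VDash_i \varphi$ if and only if $M \models \varphi$. For $\varphi$ of the form $\psi_1 \wedge \psi_2$, $\psi_1 \vee \psi_2$ or $\exists x\,\psi(x)$, the clauses (2)--(4) defining $\VDash_i$ have exactly the same shape as the Tarskian clauses defining $\models$, so the equivalence for $\varphi$ follows from the induction hypothesis applied to the proper subformulas; in the existential case one invokes the hypothesis for each instance $\psi(a)$ with $a \in M$, which is legitimate since substituting a parameter for a free variable does not raise the complexity. As $\forall$ and $\to$ are abbreviations built from $\neg$, $\wedge$, $\vee$ and $\exists$, no further cases are required.

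The substantive case is $\varphi$ of the form $\neg\psi$. Here I would first note that $M$ is trivially an extension of itself (via the identity embedding), so clause (5) yields immediately that $M \VDash_i \neg\psi$ implies that $M$ does not $i$-force $\psi$. For the converse, if $M$ does not $i$-force $\psi$, then the dichotomy supplied by infinite genericity --- $M \VDash_i \psi$ or $M \VDash_i \neg\psi$ --- forces $M \VDash_i \neg\psi$. Hence $M \VDash_i \neg\psi$ holds if and only if $M$ does not $i$-force $\psi$. Combining this with the induction hypothesis for $\psi$ in contrapositive form, namely $M \not\models \psi$ if and only if $M$ does not $i$-force $\psi$, and with the classical clause $M \models \neg\psi$ if and only if $M \not\models \psi$, we obtain $M \models \neg\psi$ if and only if $M \VDash_i \neg\psi$, which closes the induction.

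The main obstacle is concentrated in that last case, and it is conceptual rather than computational: for an arbitrary structure $\VDash_i$ need not be classical --- one can have $M \models \neg\psi$ while $M$ fails to $i$-force $\neg\psi$, because some extension $N$ of $M$ happens to $i$-force $\psi$ --- so without additional input the statement is false. The hypothesis ``$M$ infinitely generic'' is exactly what restores the relevant instance of excluded middle at the level of forcing, and this is the only point where it is used. A minor point of hygiene is that the induction has to be carried out over formulas with parameters in $M$, so that the hypothesis is at hand for every parametric instance arising in the recursion for $\exists$ (and thereby for $\forall$).
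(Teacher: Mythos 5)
Your argument is correct and is the standard proof of this result; the paper itself does not reprove it but simply cites Theorem 3.3 of \cite{HW}, where essentially this induction on the complexity of $\varphi$ is carried out. You correctly isolate the only non-formal step --- the negation clause, where genericity supplies $M \VDash_i \neg\psi$ from $M \not\VDash_i \psi$, and the reflexivity of ``extension'' supplies the converse --- so there is nothing to object to.
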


Under specific conditions, infinitely generic structures always exist. 

\begin{teo}\label{inductive}(\cite{HW}, Proposition 3.2)
If a class of structures $\Sigma$ is inductive (i.e. the union of structures in $\Sigma$ results in a structure in $\Sigma$), then  any structure in $\Sigma$ can be extended to an infinitely generic structure  in $\Sigma$. 
\end{teo}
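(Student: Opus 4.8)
The plan is to build the required infinitely generic structure as the union of an increasing chain obtained by repeatedly ``deciding'' formulas, with the inductiveness of $\Sigma$ guaranteeing that the limit of such a chain stays in $\Sigma$. Everything rests on one preliminary observation, \emph{persistence}: if $N \in \Sigma$ is an extension of $M \in \Sigma$ and $M \VDash_i \varphi$, then $N \VDash_i \varphi$, for every formula $\varphi$ with parameters in $M$. I would prove this by induction on the complexity of $\varphi$. The atomic case uses that atomic formulas go up along the extensions under consideration; the cases of $\land$, $\lor$ and $\exists$ are immediate from the inductive hypothesis together with $M \subseteq N$ (for $\exists x\,\varphi(x)$ the witness $a \in M$ is also in $N$); and the negation case is in fact the easiest: if no extension of $M$ in $\Sigma$ $i$-forces $\varphi$, then, since every extension of $N$ in $\Sigma$ is also an extension of $M$, no extension of $N$ $i$-forces $\varphi$ either, i.e.\ $N \VDash_i \neg\varphi$.

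Next I would isolate the \emph{one-step lemma}: for any $M \in \Sigma$ and any formula $\varphi$ with parameters in $M$ there is an extension $N \in \Sigma$ of $M$ that \emph{decides} $\varphi$, in the sense that $N \VDash_i \varphi$ or $N \VDash_i \neg\varphi$. This is immediate from clause (5) of Definition \ref{InfRobFor}: either $M \VDash_i \neg\varphi$, and we take $N = M$; or else it is not the case that every extension of $M$ in $\Sigma$ fails to $i$-force $\varphi$, so some extension $N \in \Sigma$ of $M$ satisfies $N \VDash_i \varphi$.

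With these in hand, the construction proceeds as follows. Start from the given $M = M_0 \in \Sigma$. Given $M_n$, enumerate all formulas with parameters in $M_n$ as $\langle \varphi_\xi : \xi < \mu_n\rangle$ (where $\mu_n = |M_n| + \aleph_0$) and build an increasing chain $\langle N_\xi : \xi \le \mu_n\rangle$ inside $\Sigma$ with $N_0 = M_n$, taking unions at limit stages (legitimate by inductiveness) and letting $N_{\xi+1}$ be an extension of $N_\xi$ deciding $\varphi_\xi$, as provided by the one-step lemma; by persistence $N_{\mu_n}$ then decides every $\varphi_\xi$, and we set $M_{n+1} = N_{\mu_n}$. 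Finally put $M_\infty = \bigcup_{n<\omega} M_n$, which lies in $\Sigma$ by inductiveness and extends $M_0$. To check that $M_\infty$ is infinitely generic, take any formula $\varphi$ with parameters in $M_\infty$: it has only finitely many parameters, so they all already lie in some $M_n$, whence $\varphi = \varphi_\xi$ for some $\xi < \mu_n$ and is decided by $M_{n+1}$; by persistence $M_\infty$ decides it the same way.

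I expect the only genuine subtlety to be the persistence lemma, and specifically its atomic base case: one must pin down in exactly which sense atomic formulas are absolute between a structure and its extensions in the relevant class, which is precisely the point glossed over in the Remark following Definition \ref{InfRobFor}. The rest is bookkeeping, the key combinatorial point being that a formula carries only finitely many parameters, so along a $\subseteq$-increasing chain of length $\omega$ every formula-with-parameters that ever appears is already present, hence processed, at some finite stage; inductiveness of $\Sigma$ is used exactly at the limit stages of the inner chains and at the final union.
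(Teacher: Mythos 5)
Your proposal is correct: the paper itself cites this result from \cite{HW} without proof, but your argument (iteratively extending to decide each formula, taking unions at limits via inductiveness, and running $\omega$ outer stages so that every formula with parameters in the final union is caught at some finite stage) is exactly the standard construction and is essentially the same chain argument the paper uses for its own analogous Theorem \ref{generic}, where $\sigma$-closeness plays the role of inductiveness. The only additions you make are to state and prove the persistence lemma explicitly (the paper records its analogue as a Fact without proof) and to handle the transfinite enumeration needed when the structures are uncountable; both are correct and appropriate.
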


\subsection{Modified Robinson's forcing}

We can now state the relevant property that 
we need to impose on the multiverse in order to apply Robinson infinite forcing. We need to select a $\Gamma$ such that 
$\mathbb{M}_{M}^\Gamma$ is $\sigma$-closed.

Even though the elements of  $\mathbb{M}_{M}^\Gamma$ are not closed under arbitrary unions, $\sigma$-closeness will provide
enough directness for Robinson infinite forcing construction to go through. Towards this end we can define the relevant modification 
of Robinson infinite forcing; the only difference is that we consider generic extensions instead of $\subseteq$-extensions.

\begin{defi}\label{Robinsongeneric}
The relation \emph{$N$ infinitely forces $\varphi$ in the $M$-multiverse}, denoted $N \VDash_M \varphi$, between a structure $N \in \mathbb{M}_{M}^\Gamma$  
and a sentence $\varphi$ of the language of set theory, with (a possibly empty set of) parameters in $N$, is defined as follows:
\begin{enumerate}

\item if $\varphi$ is atomic, then  $N \VDash_M \varphi$ if and only if  $N \models \varphi$,
\item $N \VDash_M  \varphi \land \psi$ if and only if $N \VDash_M \varphi$ and $N \VDash_M \psi$,
\item $N \VDash_M  \varphi \lor \psi$ if and only if $N \VDash_M \varphi$ or $N \models_M \psi$,
\item $N \VDash_M \exists x\varphi(x)$  if and only if $N \VDash_M \varphi(a)$, for some element $a \in N$,
\item $N \VDash_ M \neg \varphi$  if and only if  there is no $Q$ such that  $Q \leq_{\mathbb{M}_{M}^\Gamma} N$ and $Q \VDash_M \varphi$.

 \end{enumerate}
\end{defi}

As in Definition \ref{InfRobFor}, other connectives like $\forall$ and $\to$ are treated as abbreviations.

\begin{remark}
For notational convenience we will drop the subscript $M$ from the $\VDash_M$-relation, since we are not interested here in specific multiverses.
Therefore we will refer to Definition \ref{Robinsongeneric} as the \emph{infinite multiverse forcing} and not as the infinite $M$-multiverse 
forcing.  However it should be kept in mind that the definition of the infinite multiverse forcing may not be absolute\footnote{On the lack
of absoluteness of Robinson's infinite forcing see \cite{Manevitz}.}. 
\end{remark}

The following facts are easy consequences of the definition of the infinitely generic forcing and show similarities of this notion both with Cohen forcing 
and with the satisfaction relation. 
 
 \begin{fatto}
If $\varphi$ is a sentence of the language of set theory, with parameters in a model $N$, then $N$ cannot infinitely generic force both $\varphi$ and $\neg \varphi$. \qed
 \end{fatto}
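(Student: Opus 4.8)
The plan is to argue by contradiction, working directly from the recursive clause for negation in Definition \ref{Robinsongeneric}. Suppose toward a contradiction that some $N \in \mathbb{M}_{M}^\Gamma$ satisfies both $N \VDash_M \varphi$ and $N \VDash_M \neg\varphi$. The key observation is that the ordering $\leq_{\mathbb{M}_{M}^\Gamma}$ is reflexive, since $\Gamma$ contains the trivial forcing and $N$ is (trivially) a generic extension of itself by the trivial notion of forcing; hence $N \leq_{\mathbb{M}_{M}^\Gamma} N$.

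First I would unfold $N \VDash_M \neg\varphi$ via clause (5): this says that there is no $Q$ with $Q \leq_{\mathbb{M}_{M}^\Gamma} N$ and $Q \VDash_M \varphi$. Then I would instantiate $Q := N$, which is legitimate by reflexivity of the order noted above. This yields that it is \emph{not} the case that $N \VDash_M \varphi$, directly contradicting our assumption that $N \VDash_M \varphi$. Hence no such $N$ exists, which is exactly the statement of the Fact.

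I do not expect any real obstacle here; the only point that requires a moment's care is the justification that $N \leq_{\mathbb{M}_{M}^\Gamma} N$, i.e. that the multiverse order is reflexive. This is guaranteed by the remark following the first definition in \S 1, where it is observed that when $\Gamma$ contains the trivial forcing (and is closed under two-step iterations) the multiverse $\mathbb{M}_{M}^\Gamma$ forms a partial order — in particular a reflexive relation — since every model is a generic extension of itself via the trivial forcing. With that in hand the argument is a one-line instantiation, so the Fact follows immediately and is marked with \qed as in the statement.
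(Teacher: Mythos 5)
Your argument is correct and is exactly the immediate consequence of clause (5) that the paper has in mind when it states this Fact without proof: reflexivity of $\leq_{\mathbb{M}_{M}^\Gamma}$ (via the trivial forcing in $\Gamma$) lets you instantiate $Q := N$ and contradict $N \VDash_M \varphi$. Nothing further is needed.
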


 \begin{fatto}
If $\varphi$ is a sentence of the language of set theory, with parameters in a model $N$, such that   $N \VDash \varphi$, and $Q$ is such that  $Q \leq_{\mathbb{M}_{M}^\Gamma} N$,
then $Q \VDash \varphi$. \qed 
 \end{fatto}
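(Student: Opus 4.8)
The plan is to argue by induction on the complexity of the sentence $\varphi$, keeping $N$ and $Q \leq_{\mathbb{M}_{M}^\Gamma} N$ fixed throughout. First I would record the one point that makes every clause of Definition \ref{Robinsongeneric} applicable at $Q$: since $Q$ is a generic extension of $N$ (or $Q = N$, when the trivial forcing was used), we have $N \subseteq Q$, so every parameter of $\varphi$ lying in $N$ still lies in $Q$.

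For the atomic base case I would use that the atomic formulas of the language of set theory, $x \in y$ and $x = y$, are upward absolute from $N$ to any model containing $N$ as a submodel, in particular to $Q$; hence $N \VDash \varphi$, i.e. $N \models \varphi$, yields $Q \models \varphi$, i.e. $Q \VDash \varphi$. The conjunction and disjunction cases fall straight out of the induction hypothesis applied to the shorter subformulas. For $\varphi = \exists x\,\psi(x)$, a witness $a \in N$ for $N \VDash \exists x\,\psi(x)$ still belongs to $Q$, so the induction hypothesis for $\psi(a)$ gives $Q \VDash \psi(a)$ and hence $Q \VDash \exists x\,\psi(x)$.

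The case that carries the actual content is $\varphi = \neg\psi$, and here I would invoke the standing assumption, made when $\mathbb{M}_{M}^\Gamma$ was introduced, that $\Gamma$ contains the trivial forcing and is closed under two-step iterations, so that $\leq_{\mathbb{M}_{M}^\Gamma}$ is a transitive partial order. Assume $N \VDash \neg\psi$ and suppose, toward a contradiction, that $Q \VDash \neg\psi$ fails; by the negation clause of Definition \ref{Robinsongeneric} there is $R$ with $R \leq_{\mathbb{M}_{M}^\Gamma} Q$ and $R \VDash \psi$. Transitivity applied to $R \leq_{\mathbb{M}_{M}^\Gamma} Q \leq_{\mathbb{M}_{M}^\Gamma} N$ gives $R \leq_{\mathbb{M}_{M}^\Gamma} N$, so $R$ is a forcing extension of $N$ that forces $\psi$, contradicting $N \VDash \neg\psi$. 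This closes the induction.

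I do not expect a genuine obstacle: the statement is the multiverse analogue of the standard persistence property of Robinson's infinite forcing (cf. \cite{HW}), and the argument is the same with ``$\subseteq$-extension'' replaced by ``$\leq_{\mathbb{M}_{M}^\Gamma}$-extension''. The only delicate point is the negation clause, which rests essentially on transitivity of the multiverse order; it is worth noting that, unlike the other clauses, this step uses nothing about the subformula $\psi$, and that it would break down for a $\Gamma$ not closed under iteration.
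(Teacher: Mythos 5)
Your induction on the complexity of $\varphi$ is correct and is exactly the standard persistence argument the paper has in mind (it omits the proof as an immediate consequence of Definition \ref{Robinsongeneric}): the atomic, conjunction, disjunction and existential clauses are routine, and the negation clause is handled by transitivity of $\leq_{\mathbb{M}_{M}^\Gamma}$, which the paper secures by assuming $\Gamma$ contains the trivial forcing and is closed under two-step iterations. Your observation that the negation step is the only one with real content, and that it hinges on closure of $\Gamma$ under iteration, is accurate.
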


Our main interest in Robinson forcing is given by the possibility to define structures that are generic in a model theoretic sense. The following definition is
meant to capture a notion that is a mixture of Cohen's forcing and Robinson's infinite forcing. 

\begin{defi}
An element $N$ of $\mathbb{M}_{M}^\Gamma$  will be called \emph{infinitely generic in the multiverse} if for each sentence $\varphi$ of the language of set theory, with parameters in  $N$,
either $N \VDash \varphi$ or $N \VDash \neg \varphi$. 
\end{defi}

Structures that are infinitely generic in the multiverse not only exist, but are dense in the partial order induced by the relation of generic extendability. 

\begin{teo}\label{generic}
Every model in $\mathbb{M}_{M}^\Gamma$ is a substructure of a model that is infinitely generic in the multiverse.  
\end{teo}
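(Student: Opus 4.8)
The plan is to replay, inside the partial order $(\mathbb{M}_M^\Gamma,\leq_{\mathbb{M}_M^\Gamma})$, the transfinite construction behind Theorem~\ref{inductive}, with $\sigma$-closed lower bounds taking over the role that unions of chains play in the inductive case. So, fixing $N\in\mathbb{M}_M^\Gamma$, I would build a $\leq_{\mathbb{M}_M^\Gamma}$-descending sequence $\langle N_\xi\rangle$ with $N_0=N$, deciding one further sentence at each successor stage and passing to a $\sigma$-closed lower bound at each limit; the model $N^{*}$ obtained at the end would satisfy $N^{*}\leq_{\mathbb{M}_M^\Gamma}N$, hence $N\subseteq N^{*}$, and the whole task is to arrange that $N^{*}$ be infinitely generic in the multiverse. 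By clause~(5) of Definition~\ref{Robinsongeneric}, for this it is enough to secure the following one-sided reflection property: whenever $\varphi$ has parameters in $N^{*}$ and some $Q\leq_{\mathbb{M}_M^\Gamma}N^{*}$ satisfies $Q\VDash\varphi$, then already $N^{*}\VDash\varphi$ (for then, given an arbitrary $\varphi$, if $N^{*}\not\VDash\neg\varphi$ there is such a $Q$, so $N^{*}\VDash\varphi$, and $N^{*}$ decides $\varphi$).

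At a successor stage I would use the dichotomy built into clause~(5): for a sentence $\varphi$ with parameters in $N_\xi$, either some $Q\leq_{\mathbb{M}_M^\Gamma}N_\xi$ has $Q\VDash\varphi$ --- and then that $Q$ already decides $\varphi$, since it forces it --- or else $N_\xi\VDash\neg\varphi$ by definition. Thus, driven by a bookkeeping function which at stage $\xi$ returns some sentence $\varphi$ all of whose parameters have already turned up in an earlier $N_\eta$, I would set $N_{\xi+1}:=Q$ in the first case and $N_{\xi+1}:=N_\xi$ in the second. The persistence fact recorded after Definition~\ref{Robinsongeneric} makes these commitments permanent along the tail: if $N_{\xi+1}\VDash\varphi$ then $R\VDash\varphi$ for every $R\leq_{\mathbb{M}_M^\Gamma}N_{\xi+1}$; and if $N_\xi\VDash\neg\varphi$ then, since any $R$ lying below a later $N_{\xi'}$ is itself $\leq_{\mathbb{M}_M^\Gamma}N_\xi$, no such $R$ can force $\varphi$, so $N_{\xi'}\VDash\neg\varphi$ as well. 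Hence every decision made at a successor stage is still in force at $N^{*}$, and $N^{*}$ will have the reflection property above provided that \emph{every} sentence with parameters in $N^{*}$ gets scheduled at some stage.

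At a limit stage $\gamma$ I would apply $\sigma$-closedness to a cofinal $\omega$-subsequence of $\langle N_\xi:\xi<\gamma\rangle$ to obtain $N_\gamma\in\mathbb{M}_M^\Gamma$ below every $N_\xi$ with $\xi<\gamma$, the earlier decisions persisting to $N_\gamma$ exactly as before. This is the one point at which the argument genuinely parts company with the inductive case: since $\mathbb{M}_M^\Gamma$ is not closed under unions, $N_\gamma$ may strictly extend $\bigcup_{\xi<\gamma}N_\xi$ --- already in the Cohen multiverse, a lower bound of a strictly descending $\omega$-chain must contain the $\omega$-sequence of the intervening Cohen reals, which belongs to no $N_\xi$ --- so $N_\gamma$ can carry parameters not yet seen by the bookkeeping. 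The remedy I would adopt is to let the recursion continue: at each limit, re-enumerate the countably many sentences with parameters in the model just produced and feed them into the bookkeeping, running the recursion along an ordinal all of whose limit points have cofinality $\omega$ (so that $\sigma$-closedness always applies) and long enough that every sentence ever acquiring parameters in the constructed model is eventually scheduled, hence decided. The hard part will be to organise this bookkeeping so that the recursion provably closes off on a single $N^{*}$ deciding all sentences with parameters in $N^{*}$; it is precisely here that one must exploit that $\mathbb{M}_M^\Gamma$ is $\sigma$-closed --- rather than merely that the ambient class of models of ZFC is inductive --- together with the fact that every member of $\mathbb{M}_M^\Gamma$ is a set-generic extension of the one fixed countable model $M$.
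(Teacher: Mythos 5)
Your construction is essentially the paper's: at successor stages you use clause~(5) of Definition~\ref{Robinsongeneric} to pass to some $Q\leq_{\mathbb{M}_M^\Gamma}N_\xi$ forcing $\varphi$ whenever $N_\xi\not\VDash\neg\varphi$, at limit stages you invoke $\sigma$-closedness to obtain a lower bound, and persistence of $\VDash$ along $\leq_{\mathbb{M}_M^\Gamma}$ keeps all earlier decisions in force. The paper organizes exactly this as an $\omega$-sequence of $\omega$-chains: a first round yields $N^1$ deciding every sentence with parameters in $N$, subsequent rounds yield $N^{m+1}$ deciding every sentence with parameters in $N^m$, and a final lower bound $N^\omega$ of the $N^m$ is then declared to be infinitely generic in the multiverse.

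The step you isolate as ``the hard part'' --- that a $\leq_{\mathbb{M}_M^\Gamma}$-lower bound of a strictly descending chain properly contains the union of the chain, so the limit model carries parameters the bookkeeping never saw --- is a genuine issue, and it is precisely where the paper's own argument stops: as written, the published proof only shows that $N^\omega$ decides every sentence whose parameters lie in $\bigcup_m N^m$, and says nothing about sentences with parameters in $N^\omega\setminus\bigcup_m N^m$ (your example of the $\omega$-sequence of Cohen reals is exactly such a parameter). Your proposed remedy does not terminate as stated: stopping at any countable limit reintroduces the same problem one level up, while $\sigma$-closedness gives no lower bound for chains of uncountable length, so the recursion cannot simply be ``run long enough.'' Closing the gap requires an idea not present in either argument --- for instance, choosing the limit lower bound canonically so that each of its new elements is definable from parameters appearing at earlier stages (in the spirit of the explicit real $d$ built in Theorem~\ref{HandV}), or some other density argument for the downward-closed class of infinitely generic structures. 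So your proposal faithfully reproduces the paper's strategy and correctly locates its weak point, but, like the paper's proof, it does not complete the final step.
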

\begin{proof}
Let $N$ be an element of $\mathbb{M}_{M}^\Gamma$ and let $\{\varphi_n : n \in \omega\}$ be an enumeration of all the sentences in the language of set theory, 
with parameters in $N$.  Set $N_0 = N$ and define inductively the following chain in $\mathbb{M}_{M}^\Gamma$. Let $N_m$ be defined, if $N_m \VDash \varphi_m$
or $N_m \VDash \neg \varphi_m$, then let $N_{m+1} = N_m$. Otherwise, since $N_m \not\VDash \neg \varphi_m$, there is an an $N'$ such that $N' \leq_{\mathbb{M}_{M}^\Gamma} N_m$
and $N' \VDash  \varphi_m$. Then, let $N_{m+1} = N'$. 

Now, let $N^1$ be a grater lower bound of  $\{N_m : m \in \omega\}$. By $\sigma$-closeness, $N^1$ is still a member 
of $\mathbb{M}_{M}^\Gamma$. Moreover, if $\varphi$ is a formula with parameters in $N$, either $N^1 \VDash \varphi$ or $N^1 \VDash \neg \varphi$. In order to make an extension of $N$ fully generic we continue this construction and we generate a sequence of models $\{N^m : m \in \omega\}$ such that 

\begin{enumerate}

\item for all $m \in \omega$, we have $N^{m+1} \leq_{\mathbb{M}_{M}^\Gamma} N^m$, and
\item if $\varphi$ is a  a sentence, with parameters in a model $N_m$, then either $N^{m+1} \VDash \varphi$ or $N^{m+1} \VDash \neg \varphi$. 

 \end{enumerate}
 
 Then let $N^{\omega}$  be a grater lower bound of $\{N_m : m \in \omega\}$. 
 By construction $N^{\omega} \leq_{\mathbb{M}_{M}^\Gamma} N$ and it is infinitely generic in the multiverse. 
\end{proof}

It is possible to prove an analog of Theorem \ref{Infgen} in this new setting.

\begin{teo}(\cite{HW}, Theorem 3.3, essentially)
A model $N \in \mathbb{M}_{M}^\Gamma$ is infinitely generic in the multiverse if and only if, for each sentence $\varphi$ with parameters in  $N$, the following 
holds: $ N \VDash \varphi \mbox{ if and only if } N \models \varphi.$ \qed
\end{teo}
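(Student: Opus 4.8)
The proof will follow \cite{HW}, Theorem 3.3, essentially verbatim, the only change being that generic extension replaces substructure; so the plan is to separate the two implications, with essentially all of the content lying in the ``only if'' direction, which I would prove by induction on the complexity of $\varphi$ (recalling that $\forall$ and $\to$ are abbreviations, so the cases are atomic, $\land$, $\lor$, $\exists$, and $\neg$).

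The ``if'' direction is immediate. Assume $N \VDash \varphi$ iff $N \models \varphi$ for every sentence $\varphi$ with parameters in $N$, and fix such a $\varphi$. Since $N$ is a structure, either $N \models \varphi$ or $N \models \neg\varphi$; applying the hypothesis to $\varphi$ in the first case and to $\neg\varphi$ in the second yields $N \VDash \varphi$ or $N \VDash \neg\varphi$. Hence $N$ is infinitely generic in the multiverse.

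For the ``only if'' direction, assume $N$ is infinitely generic in the multiverse. The atomic case is clause (1) of Definition \ref{Robinsongeneric}. In the cases $\varphi \land \psi$, $\varphi \lor \psi$ and $\exists x\,\varphi(x)$ the recursive clauses (2)--(4) mirror the Tarskian clauses, so the induction hypothesis closes them at once; in the existential case one applies the hypothesis to the instances $\varphi(a)$ with $a \in N$, which again carry only parameters from $N$ and are of lower complexity. The case $\neg\varphi$ is where the hypothesis on $N$ is genuinely used. If $N \VDash \neg\varphi$, then, since $\Gamma$ contains the trivial forcing and hence $N \leq_{\mathbb{M}_{M}^\Gamma} N$, clause (5) applied with this witness gives $N \not\VDash \varphi$, so by the induction hypothesis $N \not\models \varphi$, i.e. $N \models \neg\varphi$. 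Conversely, if $N \models \neg\varphi$, i.e. $N \not\models \varphi$, then by the induction hypothesis $N \not\VDash \varphi$, and now genericity of $N$ in the multiverse forces the remaining alternative, $N \VDash \neg\varphi$.

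The one delicate step, and the place where ``infinitely generic in the multiverse'' is indispensable, is precisely this last implication: for an arbitrary $N$ one cannot pass from $N \not\VDash \varphi$ to $N \VDash \neg\varphi$, because a proper generic extension of $N$ may force $\varphi$ even when $N$ itself does not; it is exactly the built-in completeness of generic models that bridges this gap. Everything else is routine bookkeeping. In particular, since the induction never needs to compare $N$ with a proper generic extension of itself, the argument uses only reflexivity of $\leq_{\mathbb{M}_{M}^\Gamma}$ (the trivial forcing) and does not invoke the persistence Fact.
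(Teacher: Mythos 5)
Your proof is correct and is exactly the standard induction-on-complexity argument from Hirschfeld--Wheeler, Theorem 3.3, that the paper invokes without reproducing (the theorem is stated with only a citation and a \qed). You also correctly isolate the one point where the transposition to the multiverse requires care, namely that reflexivity of $\leq_{\mathbb{M}_{M}^\Gamma}$ in the negation clause rests on $\Gamma$ containing the trivial forcing, which is precisely the standing assumption the paper makes when treating $\mathbb{M}_{M}^\Gamma$ as a partial order.
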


While Cohen forcing produces models that can differ substantially between each others, Robinson's forcing produces models that display a very stable theory.

\begin{prop}\label{geneq}
If $N$ is a structure that is infinitely generic in the multiverse and $N'$ is a second structure that is infinitely generic in the multiverse and such that 
$N' \leq_{\mathbb{M}_{M}^\Gamma} N$, then $N \prec N'$: i.e., $N$ is an elementary substructure of $N'$. 
\end{prop}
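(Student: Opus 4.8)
The plan is to prove the two inclusions of elementarity simultaneously by induction on the complexity of formulas, exploiting that for infinitely generic models (in the multiverse) satisfaction and infinite forcing coincide — the analog of Theorem \ref{Infgen} stated just above — together with the persistence fact that forcing goes down the order. First I would set up the standard Tarski–Vaught-style induction: let $\varphi(\bar x)$ range over formulas of the language of set theory with the free variables shown, and let $\bar a$ be a tuple of parameters from $N$; the goal is $N \models \varphi(\bar a) \iff N' \models \varphi(\bar a)$. Atomic formulas are immediate since $N \subseteq N'$ is a substructure (indeed $N' \leq_{\mathbb{M}_M^\Gamma} N$ means $N'$ is a generic extension of $N$, so the membership relation is absolute between them on elements of $N$). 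The Boolean connectives $\land$ and $\lor$ are routine from the induction hypothesis, and negation reduces to the case of $\varphi$ itself; so the only real content is the existential quantifier step, and within that, the direction $N' \models \exists x\,\varphi(x,\bar a) \Rightarrow N \models \exists x\,\varphi(x,\bar a)$ (the other direction is trivial since any witness in $N$ is a witness in $N'$).

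For the quantifier step I would argue through the forcing relation rather than directly. Since $N$ is infinitely generic in the multiverse, $N \models \exists x\,\varphi(x,\bar a)$ is equivalent to $N \VDash \exists x\,\varphi(x,\bar a)$, which by clause (4) of Definition \ref{Robinsongeneric} holds iff $N \VDash \varphi(b,\bar a)$ for some $b \in N$; and again by genericity this is $N \models \varphi(b,\bar a)$ for some $b \in N$. Similarly $N'$ being infinitely generic gives $N' \models \exists x\,\varphi(x,\bar a) \iff N' \VDash \varphi(b',\bar a)$ for some $b' \in N'$. So suppose $N' \models \exists x\,\varphi(x,\bar a)$ but, toward a contradiction, $N \not\models \exists x\,\varphi(x,\bar a)$. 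Then $N \VDash \neg \exists x\,\varphi(x,\bar a)$, and by clause (5) this means no $Q \leq_{\mathbb{M}_M^\Gamma} N$ with $Q \VDash \exists x\,\varphi(x,\bar a)$; in particular $N'$, which is $\leq_{\mathbb{M}_M^\Gamma} N$, does not infinitely force $\exists x\,\varphi(x,\bar a)$. But genericity of $N'$ forces the dichotomy $N' \VDash \exists x\,\varphi(x,\bar a)$ or $N' \VDash \neg \exists x\,\varphi(x,\bar a)$, and the first fails, so $N' \VDash \neg \exists x\,\varphi(x,\bar a)$, whence $N' \models \neg \exists x\,\varphi(x,\bar a)$, contradicting the assumption. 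Note this argument does not even need the induction hypothesis on $\varphi$ — the forcing-based dichotomy does all the work once we have the coincidence theorem — though packaging it as an induction makes the Boolean bookkeeping transparent and handles parameters cleanly.

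The point I expect to require the most care is not the logical skeleton but making sure every clause of the modified forcing relation (Definition \ref{Robinsongeneric}) is legitimately invoked with parameters in the \emph{right} model: clause (5) for $N$ quantifies over $Q \leq_{\mathbb{M}_M^\Gamma} N$, and one must check that $N' \leq_{\mathbb{M}_M^\Gamma} N$ genuinely puts $N'$ in the scope of that quantifier with the \emph{same} parameters $\bar a \in N \subseteq N'$ — this is exactly why the argument is phrased for sentences with parameters rather than for pure sentences, and it is the only place where the substructure relation $N \subseteq N'$ (as opposed to mere elementary comparability) is used essentially. A secondary subtlety is that the coincidence theorem ($N \models \varphi \iff N \VDash \varphi$) is being applied to both $N$ and $N'$, so one should remark at the outset that $N'$ is indeed infinitely generic in the multiverse by hypothesis, so both models satisfy its conclusion. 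Once these points are noted, the proof is a short formal induction; I would write it in roughly half a page.
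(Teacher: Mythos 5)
Your proposal is correct and its essential content coincides with the paper's own proof: both directions follow from the coincidence of satisfaction and forcing in infinitely generic structures, the downward persistence of $\VDash$ along $\leq_{\mathbb{M}_{M}^\Gamma}$, and clause (5) together with the genericity dichotomy to rule out $N \VDash \neg\varphi$. The Tarski--Vaught induction you set up is, as you yourself observe, dispensable --- the paper simply runs the forcing-based argument directly on an arbitrary sentence with parameters in $N$, which is the cleaner way to write it.
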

\begin{proof}
Let $\varphi$ be a sentence with parameters in $N$. If $N \models \varphi$, then $N \VDash \varphi$. So $N' \VDash \varphi$ and therefore $N' \models \varphi$.
On the other hand if $N' \models \varphi$, then $N' \VDash \varphi$. Now, since $N \VDash \neg\varphi$ would contradict that $N' \VDash \varphi$, then, due to
the genericity of $N$, we have that $N \VDash \varphi$. This is enough since we get that $N \models \varphi$. 
\end{proof}

Proposition \ref{geneq} is the key ingredient to show the high level of absoluteness displayed by the structures that are infinitely generic in the multiverse. 
In order to make the above statement more precise we need a couple of definitions that are adaptations of analogous notions from model theory.

\begin{defi}
Given $N$ and $N'$ such that $N' \leq_{\mathbb{M}_{M}^\Gamma} N$, $N$ is said to be  \emph{existentially complete in $N'$}, 
if every $\Sigma_1$-sentence with parameters in $N$ and true in $N'$ is also true in $N$. Moreover, we say that $N$ 
is \emph{existentially complete in $\mathbb{M}_{M}^\Gamma$} if $N$ is existentially complete in $N'$ for every $N'$ such that
$N' \leq_{\mathbb{M}_{M}^\Gamma} N$. 
\end{defi}

We are now ready to state an important property of the structures that are  infinitely generic in the multiverse. 

\begin{prop}\label{excomp}
Every structure that is  infinitely generic in the multiverse is existentially complete in $\mathbb{M}_{M}^\Gamma$.  
\end{prop}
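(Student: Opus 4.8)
The plan is to reduce the statement to Proposition \ref{geneq} by interposing, between $N'$ and $N$, a structure that \emph{is} infinitely generic in the multiverse. So fix a structure $N$ that is infinitely generic in the multiverse, an arbitrary $N' \leq_{\mathbb{M}_{M}^\Gamma} N$, and a $\Sigma_1$-sentence $\varphi$ with parameters in $N$ such that $N' \models \varphi$; the goal is to show $N \models \varphi$.

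First I would apply Theorem \ref{generic} to $N'$, obtaining $Q \in \mathbb{M}_{M}^\Gamma$ with $Q \leq_{\mathbb{M}_{M}^\Gamma} N'$ and $Q$ infinitely generic in the multiverse (the construction in the proof of Theorem \ref{generic} produces exactly such a $Q$ below the given model). Under the standing assumption that $\Gamma$ contains the trivial forcing and is closed under two-step iterations, $\leq_{\mathbb{M}_{M}^\Gamma}$ is transitive, so $Q \leq_{\mathbb{M}_{M}^\Gamma} N$ as well. Since $Q$ is a generic extension of $N'$ and all members of $\mathbb{M}_{M}^\Gamma$ are generic extensions of the c.t.m. $M$, hence transitive, we have $N \subseteq N' \subseteq Q$ as transitive models; as $\Sigma_1$-sentences are upward absolute between transitive models and the parameters of $\varphi$ lie in $N \subseteq Q$, from $N' \models \varphi$ we deduce $Q \models \varphi$. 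Now $N$ and $Q$ are both infinitely generic in the multiverse with $Q \leq_{\mathbb{M}_{M}^\Gamma} N$, so Proposition \ref{geneq} yields $N \prec Q$; since the parameters of $\varphi$ are in $N$, elementarity gives $N \models \varphi$. As $N'$ and $\varphi$ were arbitrary, this shows $N$ is existentially complete in every $N' \leq_{\mathbb{M}_{M}^\Gamma} N$, i.e.\ existentially complete in $\mathbb{M}_{M}^\Gamma$.

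The argument is short, and the only points needing care are bookkeeping: (i) that the infinitely generic extension $Q$ supplied by Theorem \ref{generic} can be taken with $Q \leq_{\mathbb{M}_{M}^\Gamma} N'$, and that $\leq_{\mathbb{M}_{M}^\Gamma}$ is genuinely transitive so that $Q \leq_{\mathbb{M}_{M}^\Gamma} N$; and (ii) the appeal to upward $\Sigma_1$-absoluteness, which is legitimate precisely because the elements of the multiverse are transitive and, being forcing extensions, end-extend one another, so no old set acquires new members. A route not using Proposition \ref{geneq} is also available: if $N$ is infinitely generic in the multiverse then $N \VDash \varphi$ or $N \VDash \neg\varphi$; the second alternative would mean no $Q' \leq_{\mathbb{M}_{M}^\Gamma} N$ forces $\varphi$, but the $Q$ built above satisfies $\varphi$ and hence, being infinitely generic, forces $\varphi$ by the multiverse analog of Theorem \ref{Infgen} --- a contradiction --- so $N \VDash \varphi$ and therefore $N \models \varphi$. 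I expect no conceptual obstacle here; the only thing to watch is the transitivity of the generic-extension order.
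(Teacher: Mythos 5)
Your proposal is correct and follows essentially the same route as the paper: extend $N'$ to an infinitely generic $N''$ (your $Q$) via Theorem \ref{generic}, push the $\Sigma_1$-sentence up by upward absoluteness, and conclude with Proposition \ref{geneq}. Your version merely makes explicit the bookkeeping (transitivity of $\leq_{\mathbb{M}_{M}^\Gamma}$ and the reason $\Sigma_1$-sentences go up) that the paper leaves implicit.
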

\begin{proof}
Let $N$ be a structure that is infinitely generic in the multiverse and let $\varphi$ be a $\Sigma_1$-sentence with parameters in $N$ and
true in some $N'$ such that $N' \leq_{\mathbb{M}_{M}^\Gamma} N$. Then, by Theorem \ref{generic}, there is a model $N''$ that 
is infinitely generic in the multiverse and such that  $N'' \leq_{\mathbb{M}_{M}^\Gamma} N'$. Since $\varphi$ is $\Sigma_1$ and holds
in $N'$, then $\varphi$ holds in $N''$, too. Now, applying Proposition \ref{geneq}, we obtain that $\varphi$ holds in $N$. 
\end{proof}

As the rest of the section will show, infinitely generic structures in $\mathbb{M}_{M}^\Gamma$ display a very strong level of generic absoluteness with respect to
generic extensions by posets in $\Gamma$.  In \cite{Bag00},  Bounded Forcing Axioms  have been characterized in terms of generic absoluteness,
 showing that $BFA(\kappa, \Gamma)$ is equivalent to say that, for all $\mathbb{P} \in \Gamma$,  $H_{\kappa^+} \prec_{\Sigma_1} H_{\kappa^+}^{V^\mathbb{P}}$. 

\begin{coro}
Let $N$ be an infinitely generic structures in $\mathbb{M}_{M}^\Gamma$, then for all $\kappa$,  $BFA(\kappa, \Gamma)$ holds in $N$. \qed
\end{coro}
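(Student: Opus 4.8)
The plan is to derive this corollary directly from Proposition~\ref{excomp} together with Bagaria's characterization of Bounded Forcing Axioms recalled just above. Recall that $BFA(\kappa, \Gamma)$ is, by \cite{Bag00}, equivalent to the assertion that for every $\mathbb{P} \in \Gamma$ one has $H_{\kappa^+} \prec_{\Sigma_1} H_{\kappa^+}^{V^{\mathbb{P}}}$, i.e. that $\Sigma_1$-sentences with parameters in $H_{\kappa^+}$ are absolute between $V$ and the $\mathbb{P}$-generic extension. So, working inside a fixed infinitely generic $N \in \mathbb{M}_M^\Gamma$, I would fix $\kappa$, a forcing $\mathbb{P} \in \Gamma^N$, and a $\Sigma_1$-formula $\varphi(x)$ with a parameter $a \in H_{\kappa^+}^N$, and aim to show that if $\varphi(a)$ holds in $H_{\kappa^+}^{N^{\mathbb{P}}}$ for some $N$-generic extension $N^{\mathbb{P}}$, then $\varphi(a)$ already holds in $H_{\kappa^+}^N$.

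First I would move the $\Sigma_1$-reflection between $H_{\kappa^+}$ and the full model: for $\Sigma_1$-formulas, truth in $H_{\kappa^+}$ is upward absolute to $V$ (any witness found in $H_{\kappa^+}$ works in $V$), and conversely a $\Sigma_1$-fact with parameters in $H_{\kappa^+}$ that holds in $V$ reflects down to $H_{\kappa^+}$ by the usual Lévy absoluteness argument. Hence $H_{\kappa^+}^{N^{\mathbb{P}}} \models \varphi(a)$ gives $N^{\mathbb{P}} \models \varphi(a)$ as a $\Sigma_1$-statement about $N^{\mathbb{P}}$. Now $N^{\mathbb{P}} \leq_{\mathbb{M}_M^\Gamma} N$ since $\mathbb{P} \in \Gamma^N$, so by Proposition~\ref{excomp} — $N$ is existentially complete in $\mathbb{M}_M^\Gamma$ — the $\Sigma_1$-sentence $\varphi(a)$, being true in $N^{\mathbb{P}}$, is true in $N$. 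Finally, reflecting back down, $N \models \varphi(a)$ yields $H_{\kappa^+}^N \models \varphi(a)$. Since $\kappa$, $\mathbb{P}$, $\varphi$ and $a$ were arbitrary, $N$ satisfies $BFA(\kappa, \Gamma)$ for all $\kappa$.

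A couple of points need care. One is that Bagaria's characterization and the passage between $H_{\kappa^+}$ and $V$ must be carried out \emph{inside} $N$, so I would note that $N$, being a generic extension of a c.t.m.\ of ZFC, is itself a (countable transitive) model of ZFC, and that the relevant absoluteness lemmas are theorems of ZFC, hence hold in $N$. The second is a bookkeeping issue about the class $\Gamma$: Proposition~\ref{excomp} quantifies over all $N' \leq_{\mathbb{M}_M^\Gamma} N$, which by definition means generic extensions of $N$ by posets in $\Gamma^N$, exactly the extensions appearing in $BFA(\kappa,\Gamma)$ as interpreted in $N$; so the two quantifier ranges match, and no closure hypothesis on $\Gamma$ beyond what is already assumed is needed. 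The main (minor) obstacle is simply making the $\Sigma_1$-reflection between $H_{\kappa^+}$ and the ambient model airtight in both directions and checking that the parameter $a$ indeed lies in $H_{\kappa^+}$ of the extension as well — but this is standard Lévy absoluteness, and once it is in place the corollary is an immediate application of existential completeness.
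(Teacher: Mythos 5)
Your argument is correct and is exactly the one the paper intends: the corollary is stated with no proof precisely because it is meant to follow from Proposition~\ref{excomp} combined with Bagaria's characterization of $BFA(\kappa,\Gamma)$ as $\Sigma_1$-absoluteness between $H_{\kappa^+}$ and $H_{\kappa^+}^{V^{\mathbb{P}}}$. Your added care about the two applications of L\'evy reflection (moving between $H_{\kappa^+}$ and the full model in both $N$ and $N^{\mathbb{P}}$) and about matching the quantifier range of $\Gamma^N$ is exactly the right bookkeeping, and fills in what the paper leaves implicit.
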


Being existentially complete in $\mathbb{M}_{M}^\Gamma$ is enough
for having absoluteness with respect to all $\Pi_2$-sentences. 

\begin{prop}\label{Pi2}
If $N$ is existentially complete in $N'$ and $\varphi$ is a $\Pi_2$-sentence with parameters in $N$ which holds in $N'$, 
then $\varphi$ holds in $N$. 
\end{prop}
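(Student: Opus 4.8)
The plan is to argue directly from the definition of $\Pi_2$-sentence together with the hypothesis that $N$ is existentially complete in $N'$. Write $\varphi$ in the form $\forall \bar{x}\,\psi(\bar{x})$, where $\psi$ is $\Sigma_1$ (with parameters in $N$). Suppose toward a contradiction that $\varphi$ fails in $N$; then there are parameters $\bar{a} \in N$ such that $N \models \neg\psi(\bar{a})$. Since $\psi$ is $\Sigma_1$, its negation $\neg\psi$ is $\Pi_1$, and $\Pi_1$-sentences are downward absolute from $N$ to $N'$ (as $N' \leq_{\mathbb{M}_{M}^\Gamma} N$ means $N'$ is a generic extension of $N$, hence $N'$ contains $N$ as a substructure — I would invoke here the standard fact that $\Pi_1$ statements persist to substructures, or dually that $\Sigma_1$ statements persist upward to extensions). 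Wait — I need to be careful about the direction: existential completeness controls $\Sigma_1$ going \emph{up} from $N'$ to $N$, while here I want information about $\psi(\bar{a})$ in $N'$. So instead: since $\varphi$ holds in $N'$ and the $\bar{a}$ are in $N' $ (being in $N$), we have $N' \models \psi(\bar{a})$. Now $\psi(\bar{a})$ is a $\Sigma_1$-sentence with parameters in $N$ that holds in $N'$; by existential completeness of $N$ in $N'$, it follows that $N \models \psi(\bar{a})$. Since $\bar{a}$ was arbitrary, $N \models \forall \bar{x}\,\psi(\bar{x})$, i.e.\ $N \models \varphi$.

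The key steps, in order, are: (1) fix the prenex form $\varphi = \forall \bar{x}\,\psi(\bar{x})$ with $\psi \in \Sigma_1$; (2) take arbitrary witnesses $\bar{a} \in N \subseteq N'$ and instantiate, obtaining that $N' \models \psi(\bar{a})$ from $N' \models \varphi$; (3) observe that $\psi(\bar{a})$ is a $\Sigma_1$-sentence with parameters in $N$ true in $N'$, and apply the definition of ``$N$ existentially complete in $N'$'' to conclude $N \models \psi(\bar{a})$; (4) universally generalize over $\bar{a}$ to get $N \models \varphi$. No appeal to genericity of $N$ or to Theorem \ref{generic} is needed — only the definition of existential completeness.

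The main point requiring a word of care — the only real ``obstacle'' — is the direction of absoluteness and the fact that the parameters $\bar{a}$ lie in $N'$. Since $N' \leq_{\mathbb{M}_{M}^\Gamma} N$ means $N'$ is a generic extension of $N$, we have $N \subseteq N'$, so every parameter in $N$ is available in $N'$ and the instantiation $N' \models \psi(\bar{a})$ from $N' \models \forall\bar{x}\,\psi(\bar{x})$ is immediate. After that, the argument is a one-line application of the definition. I would also remark that the same proof scheme shows more generally that existential completeness propagates $\Pi_2$ upward through any single step $N' \leq_{\mathbb{M}_{M}^\Gamma} N$, and hence, combining with existential completeness in $\mathbb{M}_{M}^\Gamma$, one gets full $\Pi_2$-absoluteness for infinitely generic structures across the whole multiverse below $N$.
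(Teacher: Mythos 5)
Your proposal is correct and, once you discard the abandoned contradiction detour, it is exactly the paper's argument: fix $\bar{a}\in N\subseteq N'$, instantiate $N'\models\psi(\bar a)$ from $N'\models\varphi$, apply existential completeness to the $\Sigma_1$-sentence $\psi(\bar a)$ to get $N\models\psi(\bar a)$, and generalize. No differences worth noting.
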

\begin{proof}
Let $\varphi = \forall x_1, \ldots, x_n \psi(x_1, \ldots, x_n)$, with $\psi$ being $\Sigma_1$. Let $a_1, \ldots a_n$ be arbitrary 
but fixed elements of $N$. Then, since $N' \leq_{\mathbb{M}_{M}^\Gamma} N$, we have that $N \subseteq N'$ and therefore that 
$\varphi$ can be seen as having parameters in $N'$ also. Since $\varphi$ holds in $N'$, we have that $\psi(a_1, \ldots, a_n)$
also holds in $N'$. Being $N$ existentially complete in $N'$ and $\psi$ being $\Sigma_1$, we get that $N \models \psi(a_1, \ldots, a_n)$.
By arbitrariness of the choice of $a_1, \ldots a_n$ we have that $N \models \forall x_1, \ldots, x_n \psi(x_1, \ldots, x_n)$ must be
the case. Hence $\varphi$ holds in $N$.
\end{proof}

\begin{coro}
If $N$ is existentially complete in $\mathbb{M}_{M}^\Gamma$ and $\varphi$ is a $\Pi_2$-sentence with parameters in $N$, that holds in $N'$ 
for some $N'$ such that $N' \leq_{\mathbb{M}_{M}^\Gamma} N$, then $\varphi$ holds in $N$. \qed
\end{coro}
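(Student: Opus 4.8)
The plan is to obtain this as an immediate consequence of Proposition~\ref{Pi2}, together with the unwinding of the definition of existential completeness in the multiverse. First I would spell out the hypothesis: to say that $N$ is existentially complete in $\mathbb{M}_{M}^\Gamma$ means, by definition, that $N$ is existentially complete in $N'$ for \emph{every} $N' \in \mathbb{M}_{M}^\Gamma$ with $N' \leq_{\mathbb{M}_{M}^\Gamma} N$. So I would fix the particular witness supplied by the corollary's assumption, namely an $N'$ with $N' \leq_{\mathbb{M}_{M}^\Gamma} N$ and $N' \models \varphi$, and note that in particular $N$ is existentially complete in \emph{this} $N'$. (Observe that only this instance of existential completeness is actually needed, so the full universally quantified notion is slightly more than the argument requires.)

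Next I would simply invoke Proposition~\ref{Pi2} applied to the pair $N$, $N'$: it states that whenever $N$ is existentially complete in $N'$ and $\varphi$ is a $\Pi_2$-sentence with parameters in $N$ that holds in $N'$, then $\varphi$ holds in $N$. Both hypotheses are now available — existential completeness in $N'$ from the previous step, and $N' \models \varphi$ from the assumption — so the proposition delivers $N \models \varphi$, which is exactly the conclusion.

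The only point needing a word of care, and it is already addressed inside the proof of Proposition~\ref{Pi2}, is the bookkeeping of parameters: since $N' \leq_{\mathbb{M}_{M}^\Gamma} N$ gives $N \subseteq N'$, any parameters of $\varphi$ taken from $N$ are legitimately parameters in $N'$, so the phrase ``$\varphi$ holds in $N'$'' is well posed. Beyond this there is no real obstacle; the corollary is essentially Proposition~\ref{Pi2} restated with the multiverse-wide notion of existential completeness substituted for the binary one, so the ``hard part'' is merely verifying that the definitions line up.
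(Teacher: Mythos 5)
Your proof is correct and follows exactly the route the paper intends: the corollary is left with a bare \qed precisely because it is the immediate instantiation of Proposition~\ref{Pi2} at the particular $N'$ witnessing the hypothesis, once the multiverse-wide notion of existential completeness is unwound to the binary one. Nothing further is needed.
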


\begin{defi}
Given $N$ and $N'$ such that $N' \leq_{\mathbb{M}_{M}^\Gamma} N$, $N$ is said to be  \emph{$\Pi_2$-complete in $N'$}, 
if every $\Pi_2$-sentence with parameters in $N$ and true in $N'$ is also true in $N$. Moreover, we say that $N$ 
is \emph{$\Pi_2$-complete in $\mathbb{M}_{M}^\Gamma$} if $N$ is $\Pi_2$-complete in $N'$ for every $N'$ such that
$N' \leq_{\mathbb{M}_{M}^\Gamma} N$. 
\end{defi}

\begin{prop}\label{excomp}
Every structure that is  infinitely generic in the multiverse is $\Pi_2$-complete in $\mathbb{M}_{M}^\Gamma$.  \qed
\end{prop}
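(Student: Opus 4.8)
The plan is to obtain this as an immediate corollary of two facts already established above, so the argument is a short chain of implications rather than a fresh construction. Recall that we have already shown (in the Proposition preceding Proposition \ref{Pi2}) that every structure infinitely generic in the multiverse is existentially complete in $\mathbb{M}_{M}^\Gamma$, and that Proposition \ref{Pi2} upgrades existential completeness in a single $N'$ to $\Pi_2$-absoluteness from $N'$ down to $N$. Stringing these together yields precisely the defining condition of $\Pi_2$-completeness in $\mathbb{M}_{M}^\Gamma$.

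Concretely, I would fix a structure $N \in \mathbb{M}_{M}^\Gamma$ that is infinitely generic in the multiverse and an arbitrary $N'$ with $N' \leq_{\mathbb{M}_{M}^\Gamma} N$. By the Proposition on existential completeness, $N$ is existentially complete in $\mathbb{M}_{M}^\Gamma$, hence in particular existentially complete in this $N'$. Now take any $\Pi_2$-sentence $\varphi$ with parameters in $N$ such that $N' \models \varphi$. Since $N' \leq_{\mathbb{M}_{M}^\Gamma} N$ gives $N \subseteq N'$, the parameters of $\varphi$ also lie in $N'$, so Proposition \ref{Pi2} applies and delivers $N \models \varphi$. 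As $N'$ and $\varphi$ were arbitrary, this says exactly that $N$ is $\Pi_2$-complete in every $N' \leq_{\mathbb{M}_{M}^\Gamma} N$, i.e., $\Pi_2$-complete in $\mathbb{M}_{M}^\Gamma$, by the definition just given. Equivalently, one may invoke the Corollary immediately following Proposition \ref{Pi2} in place of Proposition \ref{Pi2} itself.

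The only point that needs a word of care — and it is genuinely minor — is the bookkeeping on parameters: the notions ``existentially complete in $N'$'' and ``$\Pi_2$-complete in $N'$'' both refer to sentences with parameters in $N$, and one must note these are legitimately sentences over $N'$ as well; this is immediate from $N' \leq_{\mathbb{M}_{M}^\Gamma} N$ forcing $N$ to be a substructure of $N'$, exactly as was already used inside the proof of Proposition \ref{Pi2}. No genericity of $N$ beyond what is packaged into existential completeness is required, so Theorem \ref{generic} and Proposition \ref{geneq} enter only indirectly. Hence there is no real obstacle here: the mathematical content is carried entirely by the preceding Proposition and Corollary, and this statement merely records their combination under the terminology of $\Pi_2$-completeness.
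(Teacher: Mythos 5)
Your argument is correct and matches the paper exactly: the paper states this Proposition with no proof body precisely because it is the immediate combination of the earlier Proposition (infinitely generic $\Rightarrow$ existentially complete in $\mathbb{M}_{M}^\Gamma$) with Proposition \ref{Pi2} and its Corollary, which is the chain you spell out. Your remark on the parameter bookkeeping via $N \subseteq N'$ is the same observation already made inside the proof of Proposition \ref{Pi2}, so nothing is missing.
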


Besides these closure properties, infinitely generic structures capture an even more fundamental aspect of generic absoluteness:
the possibility that a formula is persistent across the generic multiverse. 

\begin{teo}\label{persistent}
Let $\varphi$ be a formula with parameters in $N$, with $N$ infinitely generic in the multiverse. If $\varphi$ holds in some $N' \supseteq N$ and 
$\forall N'' \supseteq N'(N'' \models \varphi)$, then $N \models \varphi$. 
\end{teo}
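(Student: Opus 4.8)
The plan is to reduce the claim to two facts already in hand: that infinitely generic models are dense below any member of $\mathbb{M}_{M}^\Gamma$ (Theorem~\ref{generic}), and that any two infinitely generic models comparable in $\leq_{\mathbb{M}_{M}^\Gamma}$ are mutually elementary (Proposition~\ref{geneq}). Here $N'\supseteq N$ and $N''\supseteq N'$ are read as the multiverse order, i.e.\ $N'\leq_{\mathbb{M}_{M}^\Gamma} N$ and $N''\leq_{\mathbb{M}_{M}^\Gamma} N'$, so the hypothesis says exactly that $\varphi$ is \emph{persistent above $N'$}: it holds in $N'$ and in every generic extension of $N'$ inside $\mathbb{M}_{M}^\Gamma$. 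The strategy is: find one infinitely generic model sitting above $N'$, move $\varphi$ to it by persistence, and pull $\varphi$ back down to $N$ by elementarity.

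First I would apply Theorem~\ref{generic} to $N'$, obtaining a model $N^{*}\in\mathbb{M}_{M}^\Gamma$ that is infinitely generic in the multiverse with $N^{*}\leq_{\mathbb{M}_{M}^\Gamma} N'$ (the construction in the proof of Theorem~\ref{generic}, run with $N'$ in the role of $N$, yields such an $N^{*}$ as a lower bound of a $\sigma$-chain of generic extensions of $N'$; $\sigma$-closure of $\mathbb{M}_{M}^\Gamma$ keeps it in the multiverse). Since $N^{*}$ is a generic extension of $N'$, it is among the models $N''$ quantified over in the hypothesis, so $N^{*}\models\varphi$ (the parameters of $\varphi$, lying in $N\subseteq N'\subseteq N^{*}$, are still available in $N^{*}$). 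Next, using closure of $\Gamma$ under two-step iterations, $\leq_{\mathbb{M}_{M}^\Gamma}$ is transitive, so from $N^{*}\leq_{\mathbb{M}_{M}^\Gamma} N'$ and $N'\leq_{\mathbb{M}_{M}^\Gamma} N$ we get $N^{*}\leq_{\mathbb{M}_{M}^\Gamma} N$. Now $N$ and $N^{*}$ are both infinitely generic in the multiverse with $N^{*}\leq_{\mathbb{M}_{M}^\Gamma} N$, so Proposition~\ref{geneq} gives $N\prec N^{*}$; since $N^{*}\models\varphi$ and $\varphi$ has its parameters in $N$, elementarity yields $N\models\varphi$, which completes the argument.

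The only real point of care—and the closest thing to an obstacle—is the bookkeeping on the order $\leq_{\mathbb{M}_{M}^\Gamma}$: one must ensure that the single infinitely generic model produced by Theorem~\ref{generic} lies \emph{below $N'$} (so the persistence hypothesis applies to it) and, simultaneously, \emph{below $N$} (so Proposition~\ref{geneq} applies), and this is exactly what transitivity of $\leq_{\mathbb{M}_{M}^\Gamma}$ provides. Note that no induction on the complexity of $\varphi$ is needed; in particular one avoids the delicate case of disjunctions that arises if one tries to prove directly that persistence above $N'$ implies $N'\VDash\varphi$, because Proposition~\ref{geneq} already delivers full elementarity between $N$ and $N^{*}$ in one stroke.
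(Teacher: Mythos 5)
Your proposal is correct and follows essentially the same route as the paper's own proof: extend $N'$ to an infinitely generic model via Theorem~\ref{generic}, invoke the persistence hypothesis to transfer $\varphi$ to that model, and then pull $\varphi$ back to $N$ by the elementarity given in Proposition~\ref{geneq}. Your version merely makes explicit the transitivity of $\leq_{\mathbb{M}_{M}^\Gamma}$ and the parameter bookkeeping that the paper leaves implicit.
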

\begin{proof}
We know that $N' \models \varphi$. Now extend $N'$ to an $N''$ that is  infinitely generic in the multiverse. By hypothesis $N'' \models \varphi$, 
but now, by Proposition \ref{geneq}, we get that $N \models \varphi$. 
\end{proof}

Notice that, in modal terms, Theorem \ref{persistent} implies that an infinitely generic structure in the multiverse 
verifies the modal principle $\Diamond\Box \varphi \to \varphi$, where $\Box\varphi$ is defined as ``$\varphi$ holds in all generic 
extensions by forcing in $\Gamma$''. 

Interestingly enough this principle has been studied both in the context of the modal logic
of forcing and in the context of generic absoluteness principles. In the former case has been named Maximality Principle  \cite{HamMP}, 
while in the latter case represents a weak form of Resurrection Axiom \cite{HamJohn}. Indeed, 
$\Diamond\Box \varphi \to \varphi$ is modally equivalent to  $\varphi \to \Box \Diamond \varphi$, which in turns expresses the fact that 
if a formula $\varphi$ is valid in a model $M$, then, whatever is the generic extension of $M$ we consider, it is possible to further extend it
to a new model where $\varphi$ holds. 

Hamkins' Resurrection Axiom is actually stronger, being in the form of an equivalence. 

 \begin{defi}(Hamkins, \cite{HamJohn})
 The Resurrection Axiom RA($\Gamma$) asserts that for every forcing $\mathbb{Q} \in \Gamma$, there is a $\mathbb{Q}$-name $\dot{\mathbb{R}}$ with
 $1_\mathbb{Q} \Vdash \dot{\mathbb{R}} \in {\Gamma}$ and  such that $H_{2^{\aleph_0}} \prec H_{2^{\aleph_0}}^{V^{\mathbb{Q} \ast \dot{\mathbb{R}}}}$. 
 \end{defi}

Using Theorem \ref{persistent} we get the following corollary.

\begin{coro}
Let $N$ be an infinitely generic structure in $\mathbb{M}_{M}^\Gamma$, then if $H_{2^{\aleph_0}}^N \models \varphi$, 
then, for every forcing $\mathbb{Q} \in \Gamma$, there is a $\mathbb{Q}$-name $\dot{\mathbb{R}}$ with
 $1_\mathbb{Q} \Vdash \dot{\mathbb{R}} \in {\Gamma}$ such that $H_{2^{\aleph_0}}^{N^{\mathbb{Q} \ast \dot{\mathbb{R}}}} \models \varphi$. \qed
\end{coro}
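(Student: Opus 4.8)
The plan is to reduce the statement to Theorem~\ref{persistent} by internalizing the hypothesis ``$H_{2^{\aleph_0}}^N\models\varphi$'' as a single first-order sentence with parameters over $N$.

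First I would let $\bar a$ denote the tuple of parameters occurring in $\varphi$, so that $\bar a\in H_{2^{\aleph_0}}^N\subseteq N$, and let $\psi$ be the sentence of the language of set theory, with parameters $\bar a$, asserting ``$\bar a\in H_{2^{\aleph_0}}$ and $H_{2^{\aleph_0}}\models\varphi(\bar a)$''. The two conjuncts hold in $N$ by hypothesis, so $N\models\psi$. Observe that $\psi$ is a bona fide formula with parameters in $N$ in the sense of Theorem~\ref{persistent}, and that for any $N'\leq_{\mathbb{M}_M^\Gamma}N$ the assertion $N'\models\psi$ says exactly that the parameters are still available in $H_{2^{\aleph_0}}^{N'}$ and that $H_{2^{\aleph_0}}^{N'}\models\varphi$.

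Next I would invoke the modal reformulation recorded after Theorem~\ref{persistent}: an infinitely generic structure in the multiverse validates $\Diamond\Box\chi\to\chi$ for every $\chi$, which is modally equivalent to $\chi\to\Box\Diamond\chi$. Applying this with $\chi=\psi$ turns $N\models\psi$ into $N\models\Box\Diamond\psi$. It then remains to unwind the modalities. The statement $N\models\Box\Diamond\psi$ means that for every $\mathbb{Q}\in\Gamma^N$ the generic extension $N^{\mathbb{Q}}$ satisfies $\Diamond\psi$; and $N^{\mathbb{Q}}\models\Diamond\psi$ unpacks---exactly as in the definition of $\mathrm{RA}(\Gamma)$---as the existence of a $\mathbb{Q}$-name $\dot{\mathbb{R}}$ with $1_\mathbb{Q}\Vdash\dot{\mathbb{R}}\in\Gamma$ for which $N^{\mathbb{Q}\ast\dot{\mathbb{R}}}\models\psi$. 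Since $\Gamma$ is closed under two-step iterations, $\mathbb{Q}\ast\dot{\mathbb{R}}\in\Gamma$ and $N^{\mathbb{Q}\ast\dot{\mathbb{R}}}\leq_{\mathbb{M}_M^\Gamma}N$, so this is genuinely a model of the multiverse below $N$; and by the choice of $\psi$, $N^{\mathbb{Q}\ast\dot{\mathbb{R}}}\models\psi$ gives $H_{2^{\aleph_0}}^{N^{\mathbb{Q}\ast\dot{\mathbb{R}}}}\models\varphi$, which is the desired conclusion.

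The only delicate point---and hence the main obstacle---is the bookkeeping around the modal translation and the parameters: one must check that the ``$\Diamond$'' of the forcing modality over $\mathbb{M}_M^\Gamma$ really is captured by Hamkins' name-based phrasing, and that the parameters $\bar a$ of $\varphi$ survive into $H_{2^{\aleph_0}}$ of the final extension. The latter is automatic here precisely because the clause $\bar a\in H_{2^{\aleph_0}}$ was built into $\psi$, so the model $N^{\mathbb{Q}\ast\dot{\mathbb{R}}}$ delivered by the modal principle is forced to ``resurrect'' those parameters. Once these identifications are fixed, the corollary is an immediate instance of Theorem~\ref{persistent}.
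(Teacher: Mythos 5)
Your proposal is correct and follows exactly the route the paper intends: the corollary is stated with no written proof, as an immediate instance of Theorem~\ref{persistent} via the modal equivalence $\Diamond\Box\chi\to\chi\;\equiv\;\chi\to\Box\Diamond\chi$ recorded in the surrounding text, which is precisely what you apply to the internalized sentence $\psi$. Your extra care in building the clause ``$\bar a\in H_{2^{\aleph_0}}$'' into $\psi$ is a sensible refinement (the survival of the parameters into $H_{2^{\aleph_0}}$ of the final extension is not automatic, and the paper glosses over it), but it does not change the argument.
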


Notice that Theorem \ref{persistent} allows for a stronger version of the above corollary; one that avoids the restriction of the parameters to $H_{2^{\aleph_0}}$.  In other words, with the terminology of \cite{HamMP}, we have that an infinitely generic structure in $\mathbb{M}_{M}^\Gamma$ models the boldface version of the Maximality Principle restricted to forcings in $\Gamma$.

The question, then, is whether there are multiverses such that the properties of $\Gamma$ allow to show that $\mathbb{M}_{M}^\Gamma$ is $\sigma$-closed. Next section 
will provide such examples.

\subsection{Examples}

We now turn our attention to the forcing $\mathbb{M}_{M}^C$. Contrary to the fact
that there are sequences of length $\omega$ of Cohen forcing extensions that are non-amalgamable,\footnote{See Observation 33 of \cite{Hamkinsgeology}.} the following theorem shows that nonetheless it is possible to find lower bounds for countable sequences of elements of $\mathbb{M}_{M}^C$. 

The following theorem was proved in a series of conversations with Joel David Hamkins, in 2011, at the Young Set Theory Workshop in Bonn and continued at the summer school on set theory and second order logic, held in London during the same year.\footnote{For completeness we report the proof that already appeared in \cite{HamUP}.}

\begin{teo}(Hamkins, V.)\label{HandV}
Let $M$ be a countable transitive model of ZFC and let $\langle c_n : n \in \omega\rangle$ 
be a tower of (finitely) mutually $M$-generic Cohen reals
$$
M \subseteq M[c_0] \subseteq M[c_0][c_1] \subseteq \ldots
$$
Then there is a $M$-generic Cohen real $d$ such that $c_n \in M[d]$. As a consequence  
$M[c_0][c_1]\ldots[c_n] \subseteq M[d]$. Furthermore, $d$ is $M[c_0, \ldots c_n]$-generic. 
\end{teo}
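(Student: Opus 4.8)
First, a remark on why one cannot simply pass to a limit of the tower. For each fixed $n$, finite mutual genericity gives, by the product lemma, that $\langle c_0,\ldots,c_n\rangle$ is an $M$-generic filter for $Add(\omega,1)^{n+1}$, a countable atomless separative poset and hence forcing equivalent to $Add(\omega,1)$; so $M[c_0,\ldots,c_n]=M[e_n]$ for a single $M$-generic Cohen real $e_n$. But the union $\bigcup_n M[c_0,\ldots,c_n]$ need not model ZFC --- its set of reals is a proper increasing $\omega$-union, so Power Set fails in it --- and is therefore not a Cohen extension of $M$ at all; and any $d$ with $c_n\in M[d]$ for every $n$ must lie strictly above this union, for otherwise some $c_{n+1}$ would already belong to $M[c_0,\ldots,c_n]$, against its genericity there. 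On the positive side each $c_n$, being generic over a model extending $M$, is in particular an $M$-generic Cohen real, so individually each $c_n$ is absorbed into some $M$-generic Cohen extension; the content of the theorem is that one $d$ can serve for all $n$ at once.

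The plan is to build $d$ by hand, over $M$, by a recursion of length $\omega$, using the countability of $M$ in the ambient universe. Fix an enumeration $\langle D_k:k\in\omega\rangle$ of all dense subsets of $Add(\omega,1)$ that lie in $M$, together with a bookkeeping device that visits the task ``meet $D_k$'' at least once for each $k$ and the task ``record a longer initial segment of $c_n$'' cofinally often for each $n$. One then constructs an increasing chain $d_0\subseteq d_1\subseteq\cdots$ in $2^{<\omega}$ with $d=\bigcup_s d_s$, encoding the auxiliary data into self-delimiting blocks whose boundaries $M[d]$ recovers from $d$ by a fixed recursive parsing, and using a block format flexible enough that every finite binary string is an initial segment of some well-formed infinite code --- so that the bits a dense set forces on us at a ``meet $D_k$'' step can always be absorbed into the encoding, as finitely many complete blocks followed by the start of a new one. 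At a ``meet $D_k$'' step one extends the current condition into $D_k$ by density; at a ``record $c_n$'' step one appends a block tagged $n$ whose content is a strictly longer initial segment of $c_n$ than recorded before. Then $d$ meets every $D_k$, so $d$ is $M$-generic for $Add(\omega,1)$; and inside $M[d]$ one parses $d$, reads the blocks tagged $n$ in order of their indices, and unions their contents to recover $c_n$. Hence $c_n\in M[d]$ and $M[c_0,\ldots,c_n]\subseteq M[d]$ for every $n$.

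The reason the two families of requirements do not collide --- and what I expect to be the delicate point --- is that although $d$ codes each $c_n$ faithfully, $M$ does not have the reals $c_n$ among its parameters, so $M$ cannot produce a dense subset of $Add(\omega,1)$ that detects or defeats the coding; this is exactly what makes a faithful coding of the $c_n$ into an $M$-generic real possible. (By contrast $d$ cannot be kept Cohen-generic over $M[c_0]$, since $c_0\in M[d]\setminus M$; so in the last clause ``$d$ is $M[c_0,\ldots,c_n]$-generic'' is to be understood as ``$M[d]$ is a Cohen-generic extension of $M[c_0,\ldots,c_n]$''.) The genuine care lies in the bookkeeping --- each $c_n$ must be recorded cofinally, with its recorded lengths tending to infinity --- and in making the decoding robust: the bits forced by a dense set may incidentally parse as ill-formed blocks, so the recovery of each $c_n$ must discard these, for instance by reading the blocks tagged $n$ strictly in order of index and ignoring whatever is inconsistent with what has been read so far.

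Finally, for the last clause, we have $M\subseteq M[c_0,\ldots,c_n]\subseteq M[d]$ with $M[d]$ a Cohen extension of $M$, so by the intermediate model theorem $M[c_0,\ldots,c_n]=M[H]$ for $H$ generic over $M$ on a complete subalgebra $\mathbb{B}_0$ of the Cohen algebra $\mathbb{B}$, and $M[d]=M[c_0,\ldots,c_n][K]$ for $K$ generic over $M[c_0,\ldots,c_n]$ on the quotient $\mathbb{B}/\mathbb{B}_0$. Since $M[c_0,\ldots,c_n]$ is itself a Cohen extension of $M$, the quotient $\mathbb{B}/\mathbb{B}_0$, computed in $M[c_0,\ldots,c_n]$, is an atomless complete Boolean algebra with a countable dense subset, namely the image of $2^{<\omega}$, hence forcing equivalent to $Add(\omega,1)$; so $M[d]=M[c_0,\ldots,c_n][d']$ for an $M[c_0,\ldots,c_n]$-generic Cohen real $d'$, which is the desired conclusion.
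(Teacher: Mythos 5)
Your construction establishes the $M$-genericity of $d$ (it meets every dense set of $M$), but the step ``inside $M[d]$ one parses $d$ \ldots{} and recovers $c_n$'' has a genuine gap, and it is located exactly where you suspect. Because the block format must be flexible enough that every finite string extends to a well-formed code, for every tag $n$ and every finite content $\tau$ the set of conditions containing a complete block tagged $n$ with content $\tau$ beyond the current position is \emph{dense} and lies in $M$; so $M$ can and does ``defeat the coding'': genericity forces $d$ to contain, for each $n$, complete blocks tagged $n$ with pairwise incomparable contents, including spurious blocks whose content properly extends the genuine record of $c_n$ so far while diverging from $c_n$. A fixed parser in $M[d]$ has no access to the enumeration $\langle D_k\rangle$ or to the construction sequence (neither lies in $M[d]$), so it cannot tell genuine blocks from spurious ones, and the rule ``ignore what is inconsistent with what has been read'' accepts the consistent-but-wrong blocks. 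There is also a decisive structural objection: nowhere does your construction use the mutual genericity of the $c_n$ --- it would run verbatim for arbitrary reals $c_n$. But not every real can be placed into a Cohen extension of $M$ (a real coding a well-order of $\omega$ of order type $\mathrm{Ord}^M$ belongs to no ZFC model of the same ordinal height as $M$), so a construction that is indifferent to the genericity of the $c_n$ cannot prove $c_n\in M[d]$; since the $M$-genericity of $d$ is not in doubt, it is the recovery step that must fail.

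The paper's proof avoids decoding altogether. It works with $Add(\omega,\omega)\cong Add(\omega,1)$ and builds $d\colon\omega\times\omega\to 2$ whose $n$-th slice $d_n$ is a \emph{finite modification} of $c_n$, interleaving a fusion that commits to finite conditions $p_n$ meeting the $n$-th dense set of $M$. Mutual genericity is used precisely to see that $d_0\times\cdots\times d_{n-1}\times e$ (with $e$ an auxiliary generic over $M[d_0\times\cdots\times d_{n-1}]$) is $M$-generic for $Add(\omega,\omega)$, hence contains a condition in $D_n$ compatible with the slices already fixed; and then $c_n\in M[d]$ is automatic because $M[d]$ contains \emph{every} finite modification of every slice of $d$ --- no identification of ``which'' modification is needed. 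If you want to salvage a coding-style argument you must likewise code each $c_n$ only up to data that $M[d]$ can enumerate outright, rather than data it must locate inside $d$. (Your preliminary remarks, and the intermediate-model argument for the final clause that $d$ is $M[c_0,\ldots,c_n]$-generic, are fine.)
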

\begin{proof}

The idea of the construction consists of making finitely many changes to each real $c_n$, resulting in a real $d_n$, in such a way that the resulting combined real $d$ is $M$-generic for the forcing $Add(\omega, \omega)$. To do this we construct $d$ as a binary function $d: \omega \times \omega \rightarrow 2$, where the $n$th slice will correspond to a real $d_n: \omega \rightarrow 2$. 

To this aim, fix an enumeration $\langle D_n : n \in \omega \rangle$ of all open dense sets of $Add(\omega, \omega)$ in $M$. 
We construct $d$ in stages, by defining a sequences $\langle d_n : n \in \omega \rangle$ of reals and another auxiliary
sequence $\langle p_n : n \in \omega \rangle$ of finite conditions in $Add(\omega, \omega)$ that will guarantee the genericity of $d$. 
Suppose we are at  stage $n$ and we  have completely specified $d_k$, for $k < n$, and we are also committed to a finite condition $p_{n-1} \in Add(\omega, \omega)$ such that $p_{n-1} \restriction n \subseteq d_0 \times \ldots \times d_{n-1}$.
We now want to get into the dense set $D_n$. By inductive hypothesis $d_0 \times \ldots \times d_{n-1}$ is an $M$-generic real, since each $d_k$ is a finite modification of the corresponding $c_k$ which are mutually $M$-generic. Now, choose a Cohen real $e$ that is $M[d_0 \times \ldots \times d_{n-1}]$-generic
for $Add(\omega, \omega)$ and such that $p_{n-1} \subseteq d_0 \times \ldots \times d_{n-1} \times e$; this is possible since compatibility with $p_{n-1}$
amounts in finite modifications that preserve genericity. Notice that $d_0 \times \ldots \times d_{n-1} \times e$ is $M$-generic for $Add(\omega, \omega)$. Consequently there is a finite $p \subseteq d_0 \times \ldots \times d_{n-1} \times e$ in $D_n$. Since both $p$ and $p_{n-1}$ are included 
in $d_0 \times \ldots \times d_{n-1} \times e$ we have that $p \cup p_{n-1}$ is a condition that, since $D_n$ is open, is in $D_n$. Call this union $p_n$. 
We then finitely modify $c_n$ to a new condition $d_n$, so to be compatible with $p_n$; hence $p_n \restriction (n +1) \subseteq d_0 \times \ldots \times d_{n}$. Therefore, defining $d = \bigtimes_{n \in \omega} d_n$, we will have that $d$ will eventually contain
all conditions $p_n$ and so will be $M$-generic for $Add(\omega, \omega)$. Indeed every $p_n$ is finite and at each step of our construction we build $d$ in order to include more and more initial coordinates of a given $p_n$. Moreover, $c_n \in M[d]$, since each $c_n$ is a finite modification of $d_n$. Hence $M[c_0][c_1] \ldots [c_n] \subseteq M[d]$. 
\end{proof}

Since the product of countably many copies of $Add(\omega, 1)$ is isomorphic to $Add(\omega, 1)$, we get the following corollary. 

\begin{coro}
The forcing $\mathbb{M}_{M}^C$ is $\sigma$-closed. Therefore, infinitely generic structures in the multiverse $\mathbb{M}_{M}^C$ exist. \qed
\end{coro}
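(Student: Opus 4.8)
The plan is to derive the corollary from Theorem \ref{HandV} together with the abstract machinery already developed, specifically Theorem \ref{generic} and the remark (in \S 1.2) that $\sigma$-closedness of $\mathbb{M}_M^\Gamma$ suffices for the infinite multiverse forcing construction to go through. So the only thing that genuinely needs argument is the first sentence: that $\mathbb{M}_M^C$, ordered by $\leq_{\mathbb{M}_M^C}$, is $\sigma$-closed, i.e. every descending $\omega$-chain has a lower bound in the partial order.

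First I would unwind what a descending chain in $\mathbb{M}_M^C$ looks like. By Definition of $\mathbb{M}_M^C$ the elements are generic extensions of $M$ by $Add(\omega,1)$ (or the trivial forcing), and $N' \leq_{\mathbb{M}_M^C} N$ means $N'$ is obtained from $N$ by a further $Add(\omega,1)$ (or trivial) extension. Hence a descending chain $N_0 \geq N_1 \geq N_2 \geq \cdots$ is, after discarding trivial steps, exactly a tower $M \subseteq M[c_0] \subseteq M[c_0][c_1] \subseteq \cdots$ of iterated Cohen extensions; since a finite iteration of $Add(\omega,1)$ is again (forcing equivalent to) $Add(\omega,1)$, and since successive Cohen reals over the previous model are in particular finitely mutually $M$-generic, this chain meets the hypotheses of Theorem \ref{HandV}. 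Applying that theorem yields an $M$-generic Cohen real $d$ with $M[c_0]\cdots[c_n] \subseteq M[d]$ for every $n$, and moreover $d$ is $M[c_0,\dots,c_n]$-generic for each $n$. I would then observe that $M[d] \in \mathbb{M}_M^C$ (it is a $Add(\omega,1)$-extension of $M$), and that for each $n$ we have $M[d] \leq_{\mathbb{M}_M^C} N_n$: indeed $M[d]$ is a Cohen extension of $N_n = M[c_0]\cdots[c_{n-1}]$ because $d$ is $N_n$-generic, and a Cohen extension of a model by the quotient forcing is again a $Add(\omega,1)$-extension since all the forcings in sight are (equivalent to) $Add(\omega,1)$ — this is exactly where the parenthetical ``product of countably many copies of $Add(\omega,1)$ is $Add(\omega,1)$'' is used. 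Thus $M[d]$ is a lower bound for the chain in $\mathbb{M}_M^C$, establishing $\sigma$-closedness.

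Having $\sigma$-closedness, the second sentence is immediate: Theorem \ref{generic} shows that in any $\sigma$-closed multiverse $\mathbb{M}_M^\Gamma$ every element is a substructure of one that is infinitely generic in the multiverse, so in particular such structures exist in $\mathbb{M}_M^C$.

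The main obstacle I anticipate is not in the logical skeleton, which is short, but in the bookkeeping around ``a finite iteration / quotient of $Add(\omega,1)$ is again $Add(\omega,1)$'': one must be careful that at each stage $N_n$ is itself a single-step Cohen extension of $M$ (so that it legitimately lies in $\mathbb{M}_M^C$ as defined, where $\Gamma$ consists only of $Add(\omega,1)$ and the trivial forcing, not of finite iterations), and that the quotient $\mathbb{P}$ with $\mathbb{P} \in \Gamma^{N_n}$ witnessing $M[d] \leq_{\mathbb{M}_M^C} N_n$ is again literally $Add(\omega,1)$ rather than merely forcing-equivalent to it. This is routine given the standard fact that $Add(\omega,1) \times Add(\omega,1) \cong Add(\omega,1)$ and that $M[d]$ arises from $N_n$ by adding a single Cohen real, so I would only sketch it, pointing to the cited corollary's observation about countable products of $Add(\omega,1)$.
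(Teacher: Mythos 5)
Your proposal is correct and follows exactly the route the paper intends: the corollary is stated with only the remark that a countable product of copies of $Add(\omega,1)$ is isomorphic to $Add(\omega,1)$, and the content you supply---identifying a descending chain in $\mathbb{M}_{M}^{C}$ with a tower of mutually generic Cohen reals, invoking Theorem \ref{HandV} to get the bounding real $d$ with $d$ generic over each $N_n$, and then citing Theorem \ref{generic}---is precisely the omitted argument. Your added care about the quotient forcing being literally (an isomorphic copy of) $Add(\omega,1)$ is a reasonable bookkeeping point that the paper glosses over.
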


However, when considering larger classes of posets, like $\sigma$-closed, proper or semiproper posets, it is easy to see that the countability of the model $M$
 represents a serious obstacle for the construction of infinite generic models in $\mathbb{M}_{M}^{\Gamma}$. This is because in countably
many steps we can reach the hight of the model and thus we risk to collapse everything to countable or to violate the powerset axiom. 
Next theorem shows that to avoid these phenomena it is sufficient to put a bound on the posets allowed.

\begin{teo}(\cite{Hamkinsgeology}, Theorem 34)\label{countablechain}
Suppose that $M$ is a countable model of ZFC, and 
$$
M[G_0] \subseteq M[G_1] \subseteq  \ldots \subseteq M[G_n] \subseteq \ldots
$$
is an increasing sequence of forcing extensions of $W$, with $G_n \subseteq \mathbb{Q}_n \in M$
being $M$-generic. If the cardinalites of the $\mathbb{Q}_n$'s in $M$ are bounded in $M$, then 
there is a set-forcing extension $M[G]$ with $M[G_n] \subseteq M[G]$, for all $n < \omega$. \qed 
\end{teo}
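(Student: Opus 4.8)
The plan is to reduce to a single L\'evy collapse by absorption and then to use the countability of $M$ to glue the entire tower into one iteration. Fix in $M$ an infinite cardinal $\kappa$ bounding all of the $|\mathbb{Q}_n|^M$; this is the only point at which the boundedness hypothesis enters. The key ingredient is the \emph{absorption property of the L\'evy collapse}: in any transitive model $W$ of ZFC, if $\mathbb{P}\in W$ has size $\le\kappa$, then $\mathbb{P}\ast\dot{\mathrm{Coll}}(\omega,\kappa)$ is forcing equivalent to $\mathrm{Coll}(\omega,\kappa)$; equivalently, there is in $W$ a complete embedding of $\mathrm{RO}(\mathbb{P})$ into $\mathrm{RO}(\mathrm{Coll}(\omega,\kappa)^W)$, and, by composing with automorphisms of the (homogeneous) collapse, one has a good deal of freedom in choosing such an embedding.

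First I would build, by recursion on $n$, an increasing tower $M=W_0\subseteq W_1\subseteq W_2\subseteq\cdots$ of transitive models of ZFC together with generics $K_n$ witnessing that $W_{n+1}$ is a $\mathrm{Coll}(\omega,\kappa)^{W_n}$-generic extension of $W_n$, arranged so that $M[G_n]\subseteq W_{n+1}$ for every $n$. At stage $n$ I have $W_n=M[E_n]$ where $E_n=K_0\ast\cdots\ast K_{n-1}$ is $M$-generic for the length-$n$ finite-support iteration of copies of the collapse, and I maintain the inductive clause that $E_n$ is generic over $M[G_\ell]$ for every $\ell\ge n$. Since $M$ is countable, every $M[G_\ell]$ is countable, so the dense sets to be met for this clause form a countable family and a single descending $\omega$-sequence of conditions handles them. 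This clause makes $G_n$ genuinely $W_n$-generic for $\mathbb{Q}_n$, so $W_n[G_n]$ is a forcing extension of $W_n$ by a poset of size $\le\kappa$; by absorption in $W_n$ I then choose a complete embedding $\iota_n\in W_n$ of $\mathrm{RO}(\mathbb{Q}_n)$ into $\mathrm{RO}(\mathrm{Coll}(\omega,\kappa)^{W_n})$ and a collapse-generic $K_n$ over $W_n$ with $\iota_n^{-1}[K_n]=G_n$ — so that $G_n\in W_{n+1}:=W_n[K_n]$ — and with $K_n$ moreover generic over every $M[G_\ell][E_n]$ with $\ell>n$, which again is a countable list of requirements and preserves the inductive clause at $n+1$. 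Finally, setting $W_\omega=\bigcup_nW_n$, the sequence $\langle K_n:n<\omega\rangle$ is $M$-generic for the length-$\omega$ finite-support iteration $\mathbb{E}_\omega$ of copies of $\mathrm{Coll}(\omega,\kappa)$, which is a set forcing definable in $M$; hence $W_\omega=M[G]$ for an $M$-generic $G$ for $\mathbb{E}_\omega$, and $M[G_n]\subseteq W_{n+1}\subseteq M[G]$ for all $n$, as required.

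The main obstacle is the step where $K_n$ must simultaneously be generic over the prescribed countably many dense sets — so that the tower glues into one set iteration over $M$ — and realize the prescribed filter $G_n$ through $\iota_n$, so that $M[G_n]$ lands inside $W_{n+1}$. The delicacy is that $\iota_n[G_n]$ already lies in each model $M[G_\ell][E_n]$ over which $K_n$ must be generic, so one must rule out that it blocks one of their dense sets. This is where one exploits both that $G_n$ is truly $W_n$-generic and the homogeneity of $\mathrm{Coll}(\omega,\kappa)^{W_n}$: the embedding $\iota_n$ can be moved by an automorphism so that $\iota_n[G_n]$ is compatible with witnesses for all the required dense sets, after which the filter $K_n\supseteq\iota_n[G_n]$ is produced by the usual descending-sequence construction, using that each $M[G_\ell][E_n]$ is itself a bounded forcing extension of $W_n[G_n]$ and is countable. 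Carrying out this compatibility argument cleanly is the heart of the proof; the remainder is bookkeeping made possible by the countability of $M$ and by the preservation of ``size $\le\kappa$'' at every finite stage of the construction.
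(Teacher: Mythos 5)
The paper itself does not prove this theorem: it is quoted from Fuchs--Hamkins--Reitz (\cite{Hamkinsgeology}, Theorem 34) and stamped with a \qed, so your proposal can only be measured against the known argument. Your overall strategy --- fix a bound $\kappa$, absorb each quotient into $\mathrm{Coll}(\omega,\kappa)$, and use the countability of $M$ to diagonalize --- is the right one and matches the cited proof in spirit. But there is a genuine gap at the final step.

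You conclude by asserting that the sequence $\langle K_n : n<\omega\rangle$, built so that each $K_n$ is generic over $W_n=M[K_0]\cdots[K_{n-1}]$ (and over various other countable models), ``is $M$-generic for the length-$\omega$ finite-support iteration $\mathbb{E}_\omega$.'' This does not follow and is false in general: stage-by-stage genericity gives genericity for each finite initial segment $\mathbb{E}_n$, but a dense subset $D\in M$ of the full iteration $\mathbb{E}_\omega$ is met only by conditions whose finite support may reach past any given $n$, and nothing in your construction forces the later coordinates of such a witness to land in the later $K_m$'s. A concrete failure: successively generic Cohen reals $c_0,c_1,\dots$ chosen adversarially so that $c_n(n)=0$ at every stage avoid the dense set of conditions having some coordinate $n$ whose $(n{+}1)$-st bit is $1$, so the induced filter on the $\omega$-product is not generic even though each factor is generic over the preceding extension. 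The repair is precisely the device used in this paper's proof of Theorem \ref{HandV}: carry along an auxiliary descending sequence $p_0\geq p_1\geq\cdots$ of \emph{finite conditions of the whole iteration} $\mathbb{E}_\omega$, at stage $n$ extending $p_{n-1}$ into the $n$-th dense subset of $\mathbb{E}_\omega$ (possible because $E_n$ is $\mathbb{E}_n$-generic and the relevant projected set is dense in $\mathbb{E}_n$), and then imposing the finitely many future coordinates of $p_n$ as additional constraints on the later $K_m$'s; these constraints are finite and nested, hence compatible with your other requirements, but they must be woven into the recursion. Without them the identification $W_\omega=M[G]$ for a set-generic $G$ is unjustified. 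I would also note that the step where $K_n$ must simultaneously realize $\iota_n[G_n]$ and be generic over all $M[G_\ell][E_n]$ is, as you acknowledge, the heart of the matter and is only gestured at: it amounts to a genericity argument for the quotient $\mathrm{Coll}(\omega,\kappa)/\iota_n[G_n]$ over countably many countable models, and it needs to be written out rather than delegated to homogeneity.
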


Then
we have the following schema of theorems.

\begin{teo}\label{restricted}
The forcing $\mathbb{M}_{M}^{\kappa}$ is $\sigma$-closed. Therefore, infinitely generic structures in the multiverse $\mathbb{M}_{M}^{\kappa}$ exist. 
\end{teo}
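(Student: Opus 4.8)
The plan is to follow the template of the Cohen case, with Theorem~\ref{countablechain} playing the role that Theorem~\ref{HandV} plays for $\mathbb{M}_{M}^{C}$. Let $\langle N_n : n \in \omega \rangle$ be a descending sequence in $\mathbb{M}_{M}^{\kappa}$; thus $N_0 = M[H_0]$ for some $M$-generic $H_0$ on a poset of cardinality $\leq \kappa^{M}$, and for each $n$ the model $N_{n+1}$ is a generic extension of $N_n$ by a poset with at most $\kappa$ elements in $N_n$. First I would rewrite the chain as a chain of forcing extensions \emph{of $M$}: iterating the intermediate posets by two-step iteration, one obtains an increasing sequence of posets $\mathbb{Q}_0, \mathbb{Q}_1, \ldots$ in $M$, each a regular suborder of the next, together with $M$-generic filters $G_n$ on $\mathbb{Q}_n$ satisfying $G_n = G_{n+1} \cap \mathbb{Q}_n$, such that $N_n = M[G_n]$.

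The one genuinely computational step is to bound $|\mathbb{Q}_n|^{M}$. Since the $(n+1)$-st factor is forced to be a poset with at most $\kappa$ elements, we may take its underlying set to be an ordinal $\gamma_n \leq \kappa$, so $\dot{\mathbb{P}}_n$ is a $\mathbb{Q}_n$-name of the form $(\check{\gamma}_n, \dot{R}_n)$ with $\dot{R}_n$ a nice name of size $\leq \kappa$; the set $\{ (p, \check{\alpha}) : p \in \mathbb{Q}_n,\ p \Vdash \check{\alpha} \in \dot{\mathbb{P}}_n \}$ is then dense in $\mathbb{Q}_n \ast \dot{\mathbb{P}}_n$, so $\mathbb{Q}_{n+1}$ may be taken to have at most $|\mathbb{Q}_n|^{M} \cdot \kappa$ elements. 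By induction $|\mathbb{Q}_n|^{M} \leq \kappa$ for all $n$; in particular the cardinalities of the $\mathbb{Q}_n$ are bounded in $M$, so Theorem~\ref{countablechain} applies and yields a set-forcing extension $M[G]$ with $N_n \subseteq M[G]$ for every $n$.

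It then remains to see that $M[G]$ can be arranged to be a lower bound \emph{inside} $\mathbb{M}_{M}^{\kappa}$. Here I would look into the construction behind Theorem~\ref{countablechain}, which, exactly as in the proof of Theorem~\ref{HandV}, amalgamates the $\mathbb{Q}_n$ into a finite-modification variant of their union (essentially a finite-support product of the successive quotients); since each $\mathbb{Q}_n$ has at most $\kappa$ elements in $M$, the resulting poset has at most $\kappa$ elements in $M$ as well, so $G$ is $M$-generic for a forcing of cardinality $\leq \kappa^{M}$ and $M[G] \in \mathbb{M}_{M}^{\kappa}$. Moreover each $\mathbb{Q}_n$ stays a regular suborder of this amalgamating forcing, so $M[G]$ is a generic extension of $N_n = M[G_n]$ by the corresponding quotient, which still has at most $\kappa$ elements in $N_n$; hence $M[G] \leq_{\mathbb{M}_{M}^{\kappa}} N_n$ for every $n$, and $M[G]$ is the required greatest lower bound. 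Thus $\mathbb{M}_{M}^{\kappa}$ is $\sigma$-closed, and since $M$ itself lies in $\mathbb{M}_{M}^{\kappa}$, Theorem~\ref{generic} produces an extension of $M$ that is infinitely generic in the multiverse; so such structures exist.

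The step I expect to be the main obstacle is the one just sketched: the statement of Theorem~\ref{countablechain} delivers only \emph{some} set-forcing extension, whereas $\sigma$-closure of the partial order $\mathbb{M}_{M}^{\kappa}$ requires the amalgamating forcing to be genuinely small --- of cardinality $\leq \kappa^{M}$ --- which has to be extracted from its construction (and may, for badly behaved $\kappa$, only be achievable after passing to $\mathbb{M}_{M}^{\lambda}$ for a slightly larger $\lambda$). If one were content with the weaker assertion that descending chains merely admit common set-forcing extensions, the bounded-cardinality observation together with Theorem~\ref{countablechain} would give it at once; the bookkeeping in the first three paragraphs is exactly what upgrades this to an honest greatest lower bound in $\mathbb{M}_{M}^{\kappa}$, and hence to the $\sigma$-closure that ``bounding the posets allowed'' is meant to buy.
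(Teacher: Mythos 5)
Your proof follows the same route as the paper: rewrite the chain as an increasing sequence of extensions of $M$ by posets of cardinality bounded by $\kappa$, invoke Theorem \ref{countablechain}, and then inspect the proof of that theorem (rather than just its statement) to see that the amalgamating extension is itself a small generic extension of $M$ and of each link of the chain. The paper's own proof is exactly this, compressed to one line, so your explicit cardinality bookkeeping and your flagged ``main obstacle'' are just the details the paper leaves to the cited proof of Theorem 34 of the set-theoretic geology paper.
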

\begin{proof}
It is sufficient to apply Theorem \ref{countablechain} to any $\omega$-chain in $\mathbb{M}_{M}^{\kappa}$, noting that the proof of Theorem 34 in \cite{Hamkinsgeology} 
shows that when 
the $G_n$'s are mutually generic, it is possible to build  a generic extension witnessing the closure of the chain by a filter $G$  that is also generic 
with respect to each $G_n$. 
\end{proof}  
  
 Notice that Theorem  \ref{restricted} is optimal, because of the following result.

 \begin{teo}(\cite{HamUP} Theorem 12)
Suppose that $M$ is a countable model of ZFC, and 
$$
M[G_0] \subseteq M[G_1] \subseteq  \ldots \subseteq M[G_n] \subseteq \ldots
$$
is an increasing sequence of forcing extensions of $W$, with $G_n \subseteq \mathbb{Q}_n \in M$
being $M$-generic. The following are equivalent:
\begin{enumerate}
\item The chain is bounded above by a forcing extension $M[H]$, by some forcing notion $\mathbb{Q} \in M$
and an $M$-generic filter $H \subseteq \mathbb{Q}$.
\item Letting $\kappa_n$ be the smallest size, in $M$, of the Boolean completion of a forcing notions $\mathbb{P}_n$, for which there is an
$M$-generic filter $H_n \subseteq \mathbb{P}_n$ such that $M[G_n] = M[H_n]$, we have that $sup\{\kappa_n :  n \in \omega\} \in M$.
\end{enumerate}

 If the cardinalites of the $\mathbb{Q}_n$'s in $M$ are bounded in $M$, then 
there is a set-forcing extension $M[G]$ with $M[G_n] \subseteq M[G]$, for all $n < \omega$. \qed 
\end{teo}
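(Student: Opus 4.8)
The plan is to establish the equivalence $(1) \Leftrightarrow (2)$ by combining the intermediate-model theorem (Solovay--Grigorieff: any model of ZFC lying between $M$ and a set-generic extension $M[H]$ is itself of the form $M[H']$, obtained by forcing over $M$ with a complete subalgebra of the Boolean completion of the forcing producing $M[H]$) with Theorem \ref{countablechain}; the final clause will then fall out as a special case.

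\emph{$(1) \Rightarrow (2)$.} Assume the chain is bounded by $M[H]$, where $H \subseteq \mathbb{Q} \in M$ is $M$-generic, and write $\mathbb{B} = \mathbb{B}(\mathbb{Q})$ for the Boolean completion of $\mathbb{Q}$ computed in $M$. Fix $n$. Since $M \subseteq M[G_n] \subseteq M[H]$, all three being models of ZFC with the same ordinals, the intermediate-model theorem yields a complete subalgebra $\mathbb{C}_n$ of $\mathbb{B}$, lying in $M$, such that $M[G_n] = M[H \cap \mathbb{C}_n]$, where $H \cap \mathbb{C}_n$ is the induced $M$-generic filter on $\mathbb{C}_n$. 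As $\mathbb{C}_n$ is already complete, it is a forcing notion witnessing the definition of $\kappa_n$, so $\kappa_n \le |\mathbb{C}_n|^M \le |\mathbb{B}|^M$. Thus $\{\kappa_n : n \in \omega\}$ is bounded above by the ordinal $|\mathbb{B}|^M \in M$, and hence, $M$ being transitive, $\sup\{\kappa_n : n \in \omega\}$ is itself an element of $M$.

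\emph{$(2) \Rightarrow (1)$.} Let $\kappa = \sup\{\kappa_n : n \in \omega\} \in M$. For each $n$ choose in $M$ a forcing notion $\mathbb{P}_n$ with $|\mathbb{B}(\mathbb{P}_n)|^M = \kappa_n$ together with an $M$-generic filter $H_n \subseteq \mathbb{P}_n$ such that $M[G_n] = M[H_n]$. Then $M[H_0] \subseteq M[H_1] \subseteq \cdots$ is exactly the original increasing chain of models, now presented via forcings whose Boolean completions all have $M$-cardinality at most $\kappa$. Theorem \ref{countablechain}, applied to this chain with the $\mathbb{B}(\mathbb{P}_n)$ in the role of the $\mathbb{Q}_n$ and their sizes bounded by $\kappa \in M$, produces a set-forcing extension $M[G]$ with $M[G_n] = M[H_n] \subseteq M[G]$ for every $n$, which is $(1)$. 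Finally, if the cardinalities $|\mathbb{Q}_n|^M$ are bounded in $M$, say by $\mu$, then presenting $M[G_n]$ via $\mathbb{B}(\mathbb{Q}_n)$ gives $\kappa_n \le |\mathbb{B}(\mathbb{Q}_n)|^M \le (2^{\mu})^M$, so $(2)$ holds and the desired $M[G]$ exists by the implication just proved (equivalently, this last sentence is a verbatim restatement of Theorem \ref{countablechain}).

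\emph{Main obstacle.} The delicate point is the use of the intermediate-model theorem in $(1) \Rightarrow (2)$: although each $\kappa_n$ is a priori well defined (the forcing $\mathbb{Q}_n$ itself witnesses $\kappa_n \le |\mathbb{B}(\mathbb{Q}_n)|^M$), that bound may grow with $n$, whereas what is needed is a single ordinal of $M$ bounding all the $\kappa_n$ simultaneously — and this is precisely what the intermediate-model theorem supplies, via $\kappa_n \le |\mathbb{B}(\mathbb{Q})|^M$ for the fixed forcing $\mathbb{Q}$ producing the bounding model. Direction $(2) \Rightarrow (1)$ is comparatively routine, the only thing to watch being that passing from $M[G_n]$ to the minimal presentation $M[H_n]$ leaves the chain of models unchanged, so that Theorem \ref{countablechain} genuinely applies to the uniformly bounded sequence $\langle \mathbb{B}(\mathbb{P}_n) : n \in \omega \rangle$.
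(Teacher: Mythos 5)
The paper states this theorem without proof, giving only the citation to \cite{HamUP} and a \qed, so there is no internal argument to compare against; your proposal is correct and is essentially the argument of the cited source. The forward direction via the Solovay--Grigorieff intermediate-model theorem (bounding each $\kappa_n$ by $|\mathbb{B}(\mathbb{Q})|^M$ for the single bounding forcing $\mathbb{Q}$) and the converse plus the final clause via feeding the minimal presentations into Theorem \ref{countablechain} are exactly the intended decomposition.
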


\subsection{Open problems}
 
Since MA$(\kappa)$ is equivalent to MA$(\kappa)$ restricted to postes of cardinality $\leq \kappa$, Theorem \ref{restricted} suggests the following interesting question:

\medskip

\begin{quote}
\emph{Question 1}: 
Letting $\Gamma_\kappa(c.c.c.)$ be the class of c.c.c. posets of cardinality $\leq \kappa$, 
is $\mathbb{M}_{M}^{\Gamma_\kappa(c.c.c.)}$  $\sigma$-closed?
\end{quote}

\medskip

More in general, defining BFA$(\Gamma_\kappa)$ to be the weakening of BFA$(\kappa, \Gamma)$ where a poset in $\Gamma$
is assumed to  have cardinality $\leq \kappa$, the following question naturally arises:

\medskip

\begin{quote}
\emph{Question 2}:
Can Robinson's infinite forcing give a consistency proof of the corresponding  BFA$(\Gamma_\kappa)$? 
\end{quote}
 
 \section{Concluding remarks}

In this paper we showed an interesting connection between generic absoluteness and Robinson infinite forcing.
One could think that the reason
is the close connection between Cohen's and Robinson's techniques. However, the possibility to recast
Robinson's infinite forcing by means of model theoretic constructions that do not involve forcing suggests 
otherwise. Indeed,  Cherlin showed \cite{Cherlin} that infinitely generic structures can be 
approximated by chains of models\footnote{See \cite{HW} for other approximation methods by E. Fisher, 
H. Simmons, or J. Hirschfield. We decided to mention only Cherlin, since the notion of 
persistent formulas bears an obvious connection with generic absoluteness.} that inductively capture larger and larger classes of persistent sentences, 
without introducing any notion coming form the theory of forcing. 

If Cohen's and Robinson's techniques are fairly different from a theoretical perspective, however the results of this paper
suggest that the strategy for overcoming independence in set theory---i.e. generic absoluteness results---can
be seen as a particular case of a model theoretic technique applied to the multiverse. In a slogan, Robinson's ideas offer
a revenge against Cohen's independence results. 

Therefore, the extent and the nature of the link between generic absoluteness and infinite forcing 
remains a promising question that still requires a complete answer. We started here by 
scratching the surface of an interesting interplay between set theory and model theory. 

The obvious extension of the results of this paper concerns the possibility to overcome the limits imposed by the height of the models of a generic 
multiverse---which, by Cohen's method, is always the same of the ground model. 
Indeed, we might modify the definition of  $\mathbb{M}_{M}^{\Gamma}$ so that
at each step the hight of the models increases. An obvious attempt could use generic ultrapowers; 
in that case it would be interesting to understand the connection between this kind of multiverse and Woodin's $\mathbb{P}_{max}$. 
We leave this task for future work.

 \bigskip
 
 {\small{\textbf{Acknowledgements.} We thank an anonymous referee for the careful reading, comments, and criticisms. We acknowledge the kind support of FAPESP in the form of the Jovem Pesquisador grant n. 2016/25891-3.}}


\begin{thebibliography}{99}


\bibitem{FriedmanArrigoni} T. Arrigoni and S. D. Friedman, The hyper universe program, \emph{Bulletin of Symbolic Logic}, 19(1), pp. 77--96, 2013.
\bibitem{Bag00} J. Bagaria, Bounded forcing axioms as principles of generic absoluteness, \emph{Archive for Mathematical Logic}, 39(6), pp. 393--401, 2000. 
\bibitem{Balaguer} M. Balaguer, \emph{Platonism and Anti-platonism in Mathematics}, Oxford University Press, 1998.
\bibitem{Cherlin} G. Cherlin, \emph{A New Approach to the Theory of Infinitely Generic Structures}, Dissertation, Yale University, 1971. 
\bibitem{Cohen2005} P. Cohen, Skolem and pessimism about proof in mathematics, \emph{Philosophical Transaction of the Royal Society A}, 363, pp. 2407--2418. 2005.
\bibitem{Hamkinsgeology} G. Fuchs and J. D. Hamkins and J. Reitz, Set-theoretic geology, \emph{Annals of Pure and Applied Logic}, 166(4), pp. 464--501, 2015.
\bibitem{HamMP} J. D. Hamkins, A simple maximality principle, \emph{Journal of Symbolic Logic}, 68(2), pp. 527--550, 2003. 
\bibitem{Hamkins} J. D. Hamkins, The set-theoretic multiverse, \emph{Review of Symbolic Logic}, 5, pp. 416--449, 2012.
\bibitem{HamUP} J. D. Hamkins, Upward closure and amalgamation in the generic multiverse of a countable model of set theory, \emph{RIMS Kyôkyûroku}, pp. 17--31, 2016. 
\bibitem{HamJohn} J. D. Hamkins and T. Johnstone, Resurrection axioms and uplifting cardinals, \emph{Archive for Mathematical Logic}, 50(3-4), pp. 463--485, 2014. 
\bibitem{HW} J. Hirschfeld and W. H. Wheeler, \emph{Forcing, Arithmetic, Division Rings}, Springer, 1975. 
\bibitem{Hodges} W. Hodges, \emph{Building Models by Games}, Cambridge University Press, 1985. 
\bibitem{Kunen} K. Kunen \emph{Set Theory. An Introduction to Independence Proofs}, North-Holland, 1980. 
\bibitem{Magidor} M. Magidor, Some set theories are more equal then others, First draft from the EFI Project website, 2012. 
\bibitem{Manevitz} L. Manevitz, Robinson forcing is not absolute \emph{Israel Journal of Mathematics}, 25, pp. 211--232, 1976. 
\bibitem{Vaananenmultiverse} J. V\"a\"an\"anen, Multiverse set theory and absolutely undecidable propositions, in \emph{Interpreting G\"odel} (J. Kennedy ed.), pp. 180-208, Cambridge University Press, 2014. 
\bibitem{Viale2} M. Viale, Category forcings, MM$^{+++}$, and generic absoluteness for the theory of strong forcing axioms, \emph{Journal of the American Mathematical Society}, 29(3), pp. 675--728, 2016. 
\bibitem{wooBMMc} H. Woodin, \emph{The axiom of Determinacy, Forcing Axioms, and the Nonstationary Ideal}, Walter de Gruyter and Co., 1999.
\bibitem{WoodinInfinite} H. Woodin, The Realm of the Infinite, in \emph{Infinity. New Research Frontiers} (M. Heller and H. Woodin eds.), pp. 89--118, Cambridge University Press, 2011. 



\end{thebibliography}
\end{document}